\documentclass[12pt]{amsart}
\usepackage{hyperref}
\usepackage[capitalize, nameinlink]{cleveref}
\crefname{theorem}{Theorem}{Theorems}
\crefname{proposition}{Proposition}{Propositions}
\crefname{observation}{Observation}{Observations}
\crefname{lemma}{Lemma}{Lemmas}
\crefname{claim}{Claim}{Claims}
\crefname{problem}{Problem}{Problems}
\crefname{conjecture}{Conjecture}{Conjectures}
\crefname{question}{Question}{Questions}
\crefname{example}{Example}{Examples}
\crefname{fact}{Fact}{Facts}


\usepackage{amssymb}
\usepackage[bbgreekl]{mathbbol}
\usepackage{graphicx}
\usepackage{ifthen}
\usepackage{pict2e}
\usepackage{xargs}
\usepackage{xspace}
\usepackage{xcolor}
\usepackage{pgf,tikz}
\usepackage{pgfplots}
\usepackage{environ}
\usepackage{caption}
\usetikzlibrary{arrows}
\usepackage{enumitem}
\usepackage{xifthen}
\usepackage{comment}
\sloppy

\newcounter{dummy}
\makeatletter
\newcommand\myitem[1][]{\item[#1]\refstepcounter{dummy}\def\@currentlabel{#1}}
\makeatother

\makeatletter
\newsavebox{\measure@tikzpicture}
\NewEnviron{scaletikzpicturetowidth}[1]{%
	\def\tikz@width{#1}%
	\begin{lrbox}{\measure@tikzpicture}%
		\BODY
	\end{lrbox}%
	\pgfmathparse{#1/\wd\measure@tikzpicture}%
	\BODY
}
\makeatother

\DeclareSymbolFontAlphabet{\mathbb}{AMSb}

\newcommand{\thistheoremname}{}
\newtheorem*{genericthm*}{\thistheoremname}
\newenvironment{namedthm*}[1]
{\renewcommand{\thistheoremname}{#1}%
	\begin{genericthm*}}
	{\end{genericthm*}}


\newcommand{\Bairespace}[1][]{
	\ifthenelse{\equal{#1}{}}{\functions{\N}{\N}}{\functions{#1}{\N}}
}
\newcommand{\bbL}{\mathbb{L}}
\newcommand{\bbX}{\mathbb{X}}

\newcommand{\Cantorspace}[1][]{
	\ifthenelse{\equal{#1}{}}{\functions{\N}{2}}{\functions{#1}{2}}
}

\newcommandx{\concatenation}[2][1 = undefined, 2 = undefined]{
	\ifthenelse{\equal{#1}{undefined}}{{}\smallfrown}{
		\ifthenelse{\equal{#2}{undefined}}{\bigoplus #1}{\bigoplus_{#1} #2}
	}
}

\newcommandx{\functions}[3][3 =]{
	\ifthenelse{\equal{#3}{}}{#2^{#1}}{#2_{#3}^{#1}}
}

\newcommand{\Gzero}[1][]{
	\ifthenelse{\equal{#1}{}}
	{\mathbb{G}_0}
	{\mathbb{G}_{0,n}}
}
\newcommandx{\Hzero}[2][2 = undefined]{
	\ifthenelse{\equal{#2}{undefined}}
	{\mathbb{H}_{#1}}
	{\mathbb{H}_{#1, #2}}
}
\newcommandx{\intersection}[2][1 =, 2 =]{
	\ifthenelse{\equal{#1}{}}{\cap}{
		\ifthenelse{\equal{#2}{}}{\bigcap #1}{{\bigcap_{#1} #2}}
	}
}
\newcommand{\Lzero}[1][]{\ifthenelse{\equal{#1}{}}{\bbL_0}{L_{0, #1}}}
\newcommand{\Lzerospace}[1][]{\ifthenelse{\equal{#1}{}}{\bbX_0}{X_{0, #1}}}

\newcommand{\modulo}[1]{\ (\text{mod } 2)}
\newcommand{\N}{\mathbb{N}}

\newcommandx{\product}[2][1 =, 2 =]{
	\ifthenelse{\equal{#1}{}}{\times}{
		\ifthenelse{\equal{#2}{}}{\prod #1}{{\prod_{#1} #2}}
	}
}

\newcommandx{\sequence}[2][2 = undefined]{
	\ifthenelse{\equal{#2}{undefined}}{(#1)}{
		(#1)_{#2}
	}
}

\newcommandx{\set}[2][2 = undefined]{
	\ifthenelse{\equal{#2}{undefined}}{\{ #1 \}}{
		\{ #1 \suchthat #2 \}
	}
}
\newcommandx{\sets}[3][3 =]{
	\ifthenelse{\equal{#3}{}}{[#2]^{#1}}{[#2]^{#1}_{#3}}
}

\newcommand{\suchthat}{\mid}

\renewcommand{\restriction}[2]{#1 \upharpoonright #2}

\newcommandx{\union}[2][1 =, 2 =]{
	\ifthenelse{\equal{#1}{}}{\cup}{
		\ifthenelse{\equal{#2}{}}{\bigcup #1}{{\bigcup_{#1} #2}}
	}
}




\newtheorem{theorem}{Theorem}[section]
\newtheorem{lemma}[theorem]{Lemma}

\newtheorem{claim}[theorem]{Claim}
\newtheorem{corollary}[theorem]{Corollary}
\newtheorem{proposition}[theorem]{Proposition}

\theoremstyle{definition}

\newtheorem{definition}[theorem]{Definition}

\numberwithin{equation}{section}



\newcommand{\bd}{\begin{definition}}
	\newcommand{\ed}{\end{definition}}

\DeclareMathOperator{\dist}{dist}
\DeclareMathOperator{\didistance}{didist}

\newcommand{\distance}[3]{\ifthenelse{\isempty{#3}}{\dist(#1,#2)}{\dist^{#3}(#1,#2)}}
\newcommand{\didist}[3]{\ifthenelse{\isempty{#3}}{\didistance(#1,#2)}{\didistance^{#3}(#1,#2)}}
\newcommand{\digraph}[3]{\ifthenelse{\equal{#1}{b}}{\mathbb{#2}_{#3}}
	{{#2}_{#3}}}
\newcommand{\linegraph}[3]{\ifthenelse{\equal{#1}{b}}{\mathbb{#2}_{#3}}
	{#2_{#3}}}

\newcommand{\underlyingspace}[3]{\ifthenelse{\equal{#1}{b}}{\mathbb{#2}_{#3}}
	{#2_{#3}}}
\newcommand{\distanceset}[2]{\ifthenelse{\isempty{#2}}{D(#1)}{D^{#2}(#1)}}

\newcommand{\SG}{\mathcal{G}_\mathcal{S}}

\newcommand{\fG}{\mathcal{G}}

\newcommand{\fQ}{\mathcal{Q}}

\newcommand{\fS}{\mathcal{S}}


\begin{document}

	\thanks{}
	
	\keywords{}
	
	\subjclass[2020]{Primary 03E15, Secondary 28A05, 05C80}
	
	\title[]{Ramsey, expanders, and Borel chromatic numbers}
	
	\author{Jan Greb\' ik}
    \address{
    Mathematics Institute.
    University of Warwick,
    Coventry CV4~7AL, UK}
    \email{jan.grebik@warwick.ac.uk}
    \author{Zolt\'an Vidny\'anszky}
    \address{Department of Mathematics, California Institute of Technology, 1200 E California Blvd, Pasadena CA 91125, USA}
    \email{vidnyanz@caltech.edu}
    \thanks{The first author was supported by Leverhulme Research Project Grant RPG-2018-424.
    The second author was supported by the National Research, Development and Innovation Office -- NKFIH, grants no.~113047, ~129211, and FWF Grant M2779.}

	
	
	\maketitle
	
	\begin{abstract}
		We construct bounded degree acyclic Borel graphs with large Borel chromatic number using a graph arising from Ramsey theory and limits of expander sequences.
	\end{abstract}

	\maketitle
	\section{Introduction}
	
	The investigation of definable versions of finite combinatorial concepts has generated a considerable interest in the past decade. The development of this field--often called descriptive combinatorics--revealed deep connections to other fields, such as the theory of countable groups, ergodic theory, FIID processes, and even distributed computing (see \cite{pikhurko2021descriptive_comb_survey,kechris_marks2016descriptive_comb_survey} for recent surveys). 
	
	
	Definable chromatic numbers play a prominent role in descriptive combinatorics. Recall that a \emph{Borel graph} $\mathcal{G}$ on a Polish space (i.e., separable, completely metrizable topological space) $X$ is a graph on $X$ with an edge set which is Borel in $X^2$. The \emph{Borel chromatic number} of $\mathcal{G}$ is the minimal $n \in \{1,2,\dots,\aleph_0\}$ such that $\mathcal{G}$ admits a Borel measurable $n$-coloring, in notation, $\chi_B(\mathcal{G})=n$. One can similarly define \emph{Baire measurable} and \emph{$\mu$-measurable} chromatic numbers requiring the coloring to be Baire measurable, or measurable with respect to some measure $\mu$ on $X$. We denote these by $\chi_{BM}$ and $\chi_\mu$, respectively.
	
	The systematical investigation of Borel chromatic numbers has been initiated by Kechris--Solecki--Todor\v{c}evi\'c \cite{KST}. They have shown that the Borel chromatic number of a graph with degrees at most $\Delta$ is bounded by $\Delta+1$. It was a long standing open question, whether this bound is sharp for $\Delta>2$, if one excludes the trivial case of complete graphs on $\Delta+1$ vertices. Eventually, Marks \cite{DetMarks} answered this question in the affirmative by constructing acyclic $\Delta$-regular Borel graphs with Borel chromatic number $\Delta+1$. Together with the results of Conley--Marks--Tucker-Drob \cite{BrooksMeas} this yielded examples of bounded degree acyclic Borel graphs $\mathcal{G}_\Delta$ with $\chi_{BM}(\mathcal{G}_\Delta),\chi_\mu(\mathcal{G}_\Delta)<\chi_B(\mathcal{G}_\Delta)$ (for any $\mu$ Borel probability measure). Marks' argument relies on one of the deepest theorems of descriptive set theory: Martin's Borel Determinacy. 
	
	Essentially all the reasons behind large Borel chromatic numbers of bounded degree acyclic Borel graphs come from the places discussed above, namely from Baire category, measure theory, or Borel determinacy arguments. In this paper we construct such graphs which have a large Borel chromatic number because a novel reason: an infinite dimensional generalization of Ramsey's theorem, the Galvin-Prikry theorem. Our technique also allows to simply transfer the complexity results established in \cite{toden} to the bounded degree acyclic setting (unlike the argument in \cite{brandt_chang_grebik_grunau_rozhon_vidnyaszkyhomomorphisms}, which relies on Marks' method). Hence, our main result is showing the following \emph{without invoking the Borel Determinacy Theorem}.
	
	\begin{theorem}
		\label{t:mainintro}
		For each $n \geq 4$ there exists a bounded degree acyclic Borel graph with $\chi_{BM}(\mathcal{H}_n), \chi_\mu(\mathcal{H}_n)\leq 3<\chi_B(\mathcal{H}_n)=n$.
		
		Moreover, the collection of Borel subgraphs of $\mathcal{H}_n$ with Borel chromatic number $<n$ is $\mathbf{\Sigma}^1_2$-complete\footnote{For the definition of such sets we refer the reader to \cite{kechrisclassical}, however, our main construction is accessible without familiarity with these concepts.}.
	\end{theorem}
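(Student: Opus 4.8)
The plan is to construct, for each $n \geq 4$, a bounded degree acyclic Borel graph $\mathcal{H}_n$ realizing the stated separations, by combining a graph encoding of the Galvin--Prikry theorem with a "blow-up" along a suitable sequence of finite expander graphs. The starting point is a combinatorial graph $\fG$ on the Ramsey space $\Ramseyspace$ (or on a space of increasing sequences) whose edges encode one step of the Galvin--Prikry combinatorial forcing: a Borel proper coloring of this shift-type graph would yield a Galvin--Prikry--homogeneous set on which the coloring becomes constant in a way that contradicts the edge relation, so $\chi_B(\fG) \ge 3$ by the Galvin--Prikry theorem (this is the role played by "an infinite dimensional generalization of Ramsey's theorem" in the introduction). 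Crucially $\fG$ is Baire measurable and $\mu$-measurably $2$-colorable (or $3$-colorable), since on a comeager / conull set the forcing argument fails and one extracts a low-chromatic coloring by a generic/random choice — this gives the $\chi_{BM}, \chi_\mu \le 3$ side. I would package this as a preliminary lemma: there is a bounded degree acyclic Borel graph $\fG$ (the "shift graph" $\SG$) with $\chi_B(\SG) \ge 3$ but $\chi_{BM}(\SG), \chi_\mu(\SG) \le 3$, and moreover the family of its Borel subgraphs with small Borel chromatic number is $\bs$-hard.

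Next I would iterate/amplify to push the Borel chromatic number from $3$ up to exactly $n$. The natural device is to take an $n$-fold structured product or a homomorphism-coding construction: replace each vertex of $\SG$ by a copy of a finite gadget built from a finite graph $F_k$ of chromatic number $n$ that is simultaneously a good expander and has large girth (such $F_k$ exist by the probabilistic method, cf. the classical high-girth high-chromatic-number graphs, chosen along a sequence $k \to \infty$ whose "limit" in the local-global sense is the acyclic Borel graph we want). Acyclicity of $\mathcal{H}_n$ is arranged by taking a treeing / universal-cover-type limit of the $F_k$'s rather than the $F_k$'s themselves — the expander property is what forces the Borel chromatic number to stay at $n$ in the limit (a Borel $(n-1)$-coloring would, via a Borel-to-measurable transfer and a counting/isoperimetric argument on the finite levels, contradict expansion), while Baire-category and measure arguments still deliver a $3$-coloring because one may work on an invariant comeager/conull piece where the limit graph "untangles." Formally I expect to need: (a) $\chi_B(\mathcal{H}_n) \le n$ from the $\Delta+1$ bound of \cite{KST} together with a bound on the degree, plus an explicit $n$-coloring; (b) $\chi_B(\mathcal{H}_n) \ge n$ from the expander/Ramsey core; (c) $\chi_{BM}(\mathcal{H}_n), \chi_\mu(\mathcal{H}_n) \le 3$ by exhibiting a Baire-measurable and a $\mu$-measurable $3$-coloring directly on the gadget structure.

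For the $\bs$-completeness clause I would transfer the complexity computation of \cite{toden} through the construction. Membership in $\bs$ is routine: "there exists a Borel code for an $(n{-}1)$-coloring that works" is, after the standard Lusin--Novikov / coding normalizations, a $\bs$ statement about the subgraph. For hardness, I would fix the $\bs$-complete set produced in \cite{toden} as the set of subgraphs of the base object with small chromatic number, and then promote a continuous reduction witnessing its hardness to a continuous reduction into the subgraphs of $\mathcal{H}_n$, using that our blow-up/limit operation is Borel, monotone on subgraphs, and preserves the relevant chromatic thresholds (a subgraph of the base has small chromatic number iff its image under the construction does). This last preservation statement — that the amplification neither creates nor destroys low colorings in an uncontrolled way — is the main obstacle, and it is exactly where the expander estimates and the acyclicity-by-passing-to-covers have to be made quantitative and uniform across the sequence $k \to \infty$; once that is nailed down, composing the reductions is formal.

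Two caveats on what the actual proof in the paper may do differently: the order of amplification and Ramsey-coding may be swapped (Ramsey applied to an already-blown-up object), and the "expander limit" may be realized via a concrete Borel graph on a sequence space rather than an abstract local--global limit, which would make (b) more hands-on but (c) more delicate. I would keep the modular structure above regardless, since it isolates the one hard lemma (preservation of chromatic thresholds under the expander blow-up) from the otherwise standard descriptive bookkeeping.
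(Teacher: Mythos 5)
Your high-level architecture (a Ramsey-theoretic base graph, finite gadgets, an expander limit, and a transfer of the $\mathbf{\Sigma}^1_2$-completeness from the shift-graph complexity result) matches the paper, but two of your load-bearing steps are wrong. First, the base graph: the shift graph $\SG$ on $[\N]^\N$ is acyclic but has \emph{unbounded} degree, and Galvin--Prikry gives $\chi_B(\SG)=\aleph_0$, not merely $\ge 3$. The first real step of the paper (\cref{pr:boundeddegree}) is precisely to repair this: each degree-$d$ vertex is split into $d$ vertices carrying one shift edge each, and a finite gadget $F_{n,d}$ (built from copies of $K_{n-1}$, with maximum degree $2(n-1)$) is glued on so that every $n$-coloring is forced to be constant on the $d$-tuple. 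This yields a bounded degree graph $\mathcal{H}'_n$ with $\chi_B(\restriction{\SG}{B})\le 3\iff\chi_B(\restriction{\mathcal{H}'_n}{B\times\N})\le n$, where the threshold $3$ on the left comes from the Kechris--Solecki--Todor\v{c}evi\'c theorem that a Borel finite coloring of the shift graph can be improved to a Borel $3$-coloring. You treat the base as already bounded degree and only claim $\chi_B\ge 3$, which is both factually off and too weak to amplify to $n$.

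Second, and more seriously, your mechanism for the lower bound $\chi_B(\mathcal{H}_n)\ge n$ is self-defeating. You propose that a Borel $(n-1)$-coloring would ``contradict expansion'' via a Borel-to-measurable transfer and a counting argument on the finite levels. Any such argument applies verbatim to a $\mu$-measurable $(n-1)$-coloring, so it would prove $\chi_\mu(\mathcal{H}_n)\ge n$ and contradict the bound $\chi_\mu\le 3$ you also need. In the paper the expanders play a purely \emph{transport} role: the acyclic graph is a subgraph of the product $\mathcal{H}'_n\times(\bigcup_j\mathcal{G}_j)$ cut out by a Borel edge $k$-coloring (which also yields acyclicity, since the $\mathcal{G}_j$ are disjoint and their union is acyclic), and the expansion property of \cref{t:auxiliary} is used only to show that an $n$-coloring of the product \emph{descends} to the first factor: for each $x$ pick a color whose fiber has measure $\ge 1/n$; adjacent vertices then receive different colors because their large fibers are joined by an edge of the appropriate $\mathcal{G}_j$. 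The lower bound itself still comes entirely from the Galvin--Prikry side. A minor further point: $\chi_B(\mathcal{H}_n)\le n$ cannot come from the $\Delta+1$ bound, since the degrees here are much larger than $n$ (the paper says so explicitly); it comes from the explicit colorings built into the gadget and product constructions. Your treatment of the $\mathbf{\Sigma}^1_2$-completeness clause, by pushing the reduction from the shift graph through a construction that is Borel in the codes, is essentially the paper's.
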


	A key weakness of our construction compared to the one in \cite{DetMarks} is that the degrees of the graph are much larger than its Borel chromatic number. In fact, we do not know whether it is possible to prove the existence of $\Delta$-regular acyclic Borel graphs with Borel chromatic number $\Delta+1$ without the Borel determinacy theorem.

	Our construction uses two ingredients. First, the shift graph $\mathcal{G}_\fS$  on the space of infinite subsets of $\N$ introduced in \cite{KST}. This graph is locally finite, acyclic, and by the Galvin-Prikry theorem $\chi_B(\mathcal{G}_S)=\aleph_0$, hence it has unbounded degrees. Second, we use limits of random $\Delta$-regular finite graphs, and the theory of local-global limits \cite{hatamilovaszszegedy}. The latter argument yields the following statement, which is interesting on its own.
	\begin{proposition}
		\label{pr:expanderintro}
		For any $\varepsilon>0$ there exists a bounded degree acyclic Borel graph $\mathcal{G}$ on a Borel probability measure space $(X,\mu)$ with the following property: if $\mu(B) \geq \varepsilon$ then $\mu(N_\mathcal{G}(B)) > 1-\varepsilon$ (where $N_\mathcal{G}(B)$ is the set of neighbors of vertices in $B$). 
	\end{proposition}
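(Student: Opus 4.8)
The plan is to realize $\mathcal{G}$ as a \emph{local-global limit}, in the sense of Hatami--Lov\'asz--Szegedy \cite{hatamilovaszszegedy}, of a sequence of finite $\Delta$-regular expanders, where $\Delta=\Delta(\varepsilon)$ is chosen large; I will carry out the finite stage with $\varepsilon/2$ in place of $\varepsilon$ so that the strict inequality survives in the limit. \emph{Step 1 (finite expanders).} Fix $\Delta>150\,\varepsilon^{-4}$, and for each even $n$ let $G_n$ be a uniformly random $\Delta$-regular graph on $n$ vertices (configuration model). By the classical spectral-gap estimates for random regular graphs (Broder--Shamir, Friedman), with probability tending to $1$ the second largest adjacency eigenvalue satisfies $\lambda(G_n)\le 3\sqrt{\Delta}$. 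For such a $G_n$ and any $B\subseteq V(G_n)$, no vertex of $V(G_n)\setminus N_{G_n}(B)$ has a neighbour in $B$, so the expander mixing lemma forces $\frac{\Delta\,|B|\,|V(G_n)\setminus N_{G_n}(B)|}{n}\le 3\sqrt{\Delta}\,\sqrt{|B|\,|V(G_n)\setminus N_{G_n}(B)|}$; since $\Delta(\varepsilon/2)^2>3\sqrt{\Delta}$, this gives the \emph{vertex-expansion property}: $|B|\ge(\varepsilon/2)n$ implies $|N_{G_n}(B)|>(1-\varepsilon/2)n$. Moreover, random $\Delta$-regular graphs Benjamini--Schramm converge to the $\Delta$-regular tree $T_\Delta$, since for each fixed $r$ all but a $o(1)$ fraction of vertices have an $r$-ball that is a tree. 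Fix, for each large $n$, a concrete $G_n$ witnessing both facts.

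\emph{Step 2 (passing to the limit).} The graphs $G_n$ have bounded degree $\Delta$, so by compactness of the local-global topology \cite{hatamilovaszszegedy} a subsequence converges local-globally to a graphing $\mathcal{G}$ on a standard probability space $(X,\mu)$, which we regard as a bounded degree Borel graph on a Polish space. Local-global convergence refines Benjamini--Schramm convergence, so $\mathcal{G}$ is a Benjamini--Schramm limit of $T_\Delta$; hence for every $r$, $\mu$-almost every $x$ has a tree $r$-ball, and intersecting over $r$ we find that $\mu$-a.e.\ connected component of $\mathcal{G}$ is a tree. Discarding the corresponding $\mathcal{G}$-invariant $\mu$-null set, $\mathcal{G}$ becomes a bounded degree acyclic Borel graph on a probability space. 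Note also that for any Borel $B$ the set $N_{\mathcal{G}}(B)$ is Borel, being a finite union of images of $B$ under the Borel partial maps generating $\mathcal{G}$, so $\mu(N_{\mathcal{G}}(B))$ is defined.

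\emph{Step 3 (transferring the expansion).} Suppose toward a contradiction that some Borel $B\subseteq X$ has $\mu(B)\ge\varepsilon$ but $\mu(N_{\mathcal{G}}(B))\le 1-\varepsilon$. Consider the $2$-coloring $c=\mathbf{1}_B$ of $\mathcal{G}$. Whether a vertex lies in $N_{\mathcal{G}}(B)$ is determined by its $c$-colored $1$-ball, so the pair $\bigl(\mu(B),\ \mu(N_{\mathcal{G}}(B))\bigr)$ is read off the distribution of $c$-colored $1$-balls of $\mathcal{G}$. By the definition of local-global limit, this distribution lies in the closure of the distributions of $c_n$-colored $1$-balls of $G_n$ over all colorings $c_n$; so along the chosen subsequence there are colorings $c_n$ with, writing $B_n=c_n^{-1}(1)$, $|B_n|/n\to\mu(B)\ge\varepsilon$ and $|N_{G_n}(B_n)|/n\to\mu(N_{\mathcal{G}}(B))\le 1-\varepsilon$. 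For $n$ large this gives $|B_n|\ge(\varepsilon/2)n$ yet $|N_{G_n}(B_n)|<(1-\varepsilon/2)n$, contradicting Step 1. Hence $\mu(N_{\mathcal{G}}(B))\ge 1-\varepsilon/2>1-\varepsilon$ whenever $\mu(B)\ge\varepsilon$, which is the desired property; and $\mathcal{G}$ has degree at most $\Delta$ and is acyclic.

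\emph{Main obstacle.} The delicate point is Step 3. Vertex expansion is a genuinely \emph{global} property — it quantifies over all large sets, not just the local statistics seen from a random root — so a mere Benjamini--Schramm limit (e.g.\ any graphing presentation of $T_\Delta$, such as the free part of a Bernoulli shift of $F_\Delta$) need not retain it. It is precisely the local-global topology, which by definition controls the colored-ball statistics of \emph{all} colorings simultaneously, that allows the transfer; the crux is to express the expansion inequality in terms of $1$-ball colored neighbourhood types and to verify that this (closed) condition on the colored-ball distributions is inherited by the limit, which is exactly what the contradiction argument above does. The remaining ingredients — the spectral gap of random regular graphs, the expander mixing lemma, and Benjamini--Schramm convergence to $T_\Delta$ — are standard.
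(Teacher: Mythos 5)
Your proof is correct and follows essentially the same route as the paper: random $\Delta$-regular graphs with a spectral gap, the expander mixing lemma to get vertex expansion at the finite level, and a local-global limit in the sense of Hatami--Lov\'asz--Szegedy to transfer the expansion (encoded in colored $1$-ball statistics) to the limiting graphing. The only cosmetic differences are that the paper obtains acyclicity by passing to a large-girth subsequence rather than discarding a null invariant set, and it proves a slightly stronger edge-labeled version (\cref{t:auxiliary}) needed elsewhere, of which \cref{pr:expanderintro} is a special case.
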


		Let us point out that this expansion property resembles the property proved using Marks' game \cite{DetMarks}. Indeed, for say, $n=3$, the existence of appropriate winning strategies in a sense selects the ``largest" piece of a partition of the vertices into three Borel pieces, and the game ensures that there is an edge between any two large pieces. In our case, largeness is determined using measure (take $\varepsilon=\frac{1}{3}$), and the above statement guarantees the existence of an edge between large sets.

	We close this section by giving a high-level overview of the construction. 
	
	First, we start with the shift graph  $\mathcal{G}_\fS$ and replace each degree $d$ vertex with a $d$-tuple of vertices and distribute the edges of the shift graph so that each degree becomes $1$. Then we glue a finite graph, a ``gadget", to each $d$-tuple, which ensures that any $n$-coloring must be constant on each $d$-tuple. This produces a graph $\mathcal{H}'_n$ with bounded degree and chromatic number $>n$. 
	
	Second, we construct an acyclic graph with the required properties. The basic observation is that if $\mathcal{G}$ is a graph as in \cref{pr:expanderintro} with $\varepsilon=\frac{1}{n}$ then  $\chi_B(\mathcal{H}'_n \times \mathcal{G})>n$: indeed, if an $n$-coloring of $\mathcal{H}'_n \times \mathcal{G}$ existed, one could color a vertex $x$ of $\mathcal{H}'_n$ by the color of the majority of points in the section corresponding to $x$. While the product $\mathcal{H}'_n \times \mathcal{G}$ is not acyclic, it already contains only even cycles. To obtain an acyclic graph, we use a graph $\mathcal{G}$ with slightly larger expansion and take an acyclic subgraph of the product that has large chromatic number.

	\section{Preliminaries}
		We use $V(\mathcal{G})$ for the vertex set of the graph $\mathcal{G}$ and, abusing the notation, we will identify a graph with its edge set. If $B \subseteq V(\mathcal{G})$ then $\restriction{\mathcal{G}}{B}$ stands for the graph with vertex set $B$ and edge set $\mathcal{G} \cap B^2$. A \emph{$k$-edge coloring} of a graph $\mathcal{G}$ is a mapping from $\mathcal{G}$ to $k$ so that edges sharing a common vertex are mapped to different colors.
	
	 We use $[A]^\N$ for the collection of infinite subsets of $A$, if $A$ is countable, this is a Polish space endowed with the topology inherited from $2^A$. The \emph{shift graph} on $[\N]^\N$ is defined as the symmetrization of the graph of the \emph{shift map} $\mathcal{S}$, that is, $\mathcal{S}(x)=x \setminus \{\min x \}.$
	 
	As mentioned above, the Galvin-Prikry theorem is an infinite dimensional definable generalization of Ramsey's theorem. It states that if $A_0 \cup \dots \cup A_n=[\N]^\N$ is a partition to Borel sets, then there is an $x \in [\N]^\N$ and an $i \leq n$ with $[x]^\N \subseteq A_i$. Clearly, this implies that $\chi_B(\mathcal{G}_\mathcal{S}) \geq \aleph_0$. 
	
	\begin{claim}
	\label{cl:triv}
    $\chi_{BM}(\mathcal{G}_\mathcal{S}),\chi_\mu(\mathcal{G}_\mathcal{S}) \leq 3$ for any Borel probability measure $\mu$ on $[\N]^\N$.
	\end{claim}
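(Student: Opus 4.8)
The statement is the special case, for the shift map $\mathcal{S}$, of the folklore fact that the graph $\mathcal{G}_f$ of any finite-to-one Borel function $f\colon X\to X$ on a standard Borel space satisfies $\chi_{BM}(\mathcal{G}_f),\chi_\mu(\mathcal{G}_f)\le 3$. My plan is either to cite this or to reprove it for $\mathcal{S}$. The useful reduction to keep in mind throughout: since $\min x\in x$, the map $\mathcal{S}$ is fixed-point free, so a map $c$ is a proper coloring of $\mathcal{G}_\mathcal{S}$ precisely when $c(x)\ne c(\mathcal{S}(x))$ for every $x$; hence it suffices to produce a Borel $c\colon[\N]^\N\to\{0,1,2\}$ with $c(x)\ne c(\mathcal{S}(x))$ off a $\mu$-null set (for $\chi_\mu$), respectively off a meager set (for $\chi_{BM}$). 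Note also that $\mathcal{S}$ is aperiodic and its orbit equivalence relation $E_\mathcal{S}$ is exactly tail equivalence on $[\N]^\N$, hence hyperfinite; in particular there is no ``periodic part'' of the space to dispose of by a separate $\chi_B\le 3$ argument. (One should, however, recall why no \emph{finite} explicit formula can work: a coloring depending only on a bounded initial window of $x$ would restrict to a finite coloring of a shift graph on tuples, which is impossible, so measure/category genuinely enters.)

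For the construction I would use the hyperfiniteness of $E_\mathcal{S}$ together with a marker argument. Write $E_\mathcal{S}=\bigcup_n F_n$ with $F_n\subseteq F_{n+1}$ finite Borel subequivalence relations whose classes are $\mathcal{G}_\mathcal{S}$-connected finite subtrees (possible since $\mathcal{G}_\mathcal{S}$ generates $E_\mathcal{S}$ and is acyclic), and fix a decreasing sequence of Borel complete sections $C_0\supseteq C_1\supseteq\cdots$ with $\bigcap_n C_n=\emptyset$, sparsified so that each $C_n$ is $\mathcal{G}_\mathcal{S}$-independent and contains the boundaries of the $F_n$-blocks. On the interior of each block use the $2$-coloring given by the parity of the tree-distance to a Borel-chosen root of the block, and assign the third color $2$ to every point of $C_n$. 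This is a proper $3$-coloring of $\mathcal{G}_\mathcal{S}$ restricted to each block; the only edges it can violate are those joining two distinct $F_n$-blocks, and one arranges (this is the content of the marker setup) that such an edge always has an endpoint in $C_n$, and that $\mu$-a.e.\ (respectively comeagerly many) $x$ eventually lies in the interior of its block and non-adjacent to $C_n$. Taking the pointwise limit of these colorings then yields a Borel $3$-coloring off a null (respectively meager) set.

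The main obstacle is exactly the bookkeeping in that last step: simultaneously handling the inter-block edges and forcing the ``bad'' set -- the markers together with their $\mathcal{G}_\mathcal{S}$-neighbors -- down to measure, respectively category, zero. For $\chi_\mu$ I would first replace $\mu$ by an equivalent $E_\mathcal{S}$-quasi-invariant measure in its measure class and use a Rokhlin-tower form of the marker lemma to obtain $\mu(C_n)\to 0$; for $\chi_{BM}$ I would work inside a comeager $E_\mathcal{S}$-invariant Borel set and exploit that $E_\mathcal{S}$ is generically ergodic, so that the analogous shrinking of $C_n$ to a meager set can be arranged. Once the boundary set is controlled, verifying that the limit coloring is Borel and proper off the exceptional set is routine.
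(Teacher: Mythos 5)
Your route is genuinely different from the paper's: the paper simply quotes \cite[Lemma 4.5]{toden}, which provides an explicit Borel $3$-coloring of $\mathcal{G}_\mathcal{S}$ restricted to non-dominating sets, and then uses the standard facts that there are comeager and $\mu$-conull non-dominating sets. Your plan --- hyperfiniteness of tail equivalence plus a marker construction --- is a legitimate alternative in principle (it is essentially the Conley--Miller approach to acyclic locally finite graphs), and the folklore fact you invoke is true. As written, however, the construction has a genuine gap, in fact two.

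First, the requirements you impose on $C_n$ are contradictory, and the natural repair proves too much. An independent set cannot ``contain the boundaries of the $F_n$-blocks'': the two endpoints of any inter-block edge are both boundary vertices of their respective blocks and are adjacent. The weakening you actually use later --- $C_n$ independent and meeting every inter-block edge --- is also unattainable in the Borel category: if such a Borel $C_n$ existed, your stage-$n$ coloring would already be a proper Borel $3$-coloring of all of $\mathcal{G}_\mathcal{S}$, contradicting Galvin--Prikry. So the null/meager loss must occur precisely in the failure of $C_n$ to cover some crossing edges, and quantifying that failure is the entire mathematical content of the proof; it is exactly the part you defer as ``bookkeeping.'' Second, the ``pointwise limit'' of the stage-$n$ colorings need not exist: as the $F_n$-blocks grow, the chosen root changes and the parity of the distance from a fixed $x$ to it can flip infinitely often, so one must instead freeze the coloring on an increasing sequence of good sets and only recolor on the shrinking bad set. (Also, $\mu(C_n)\to 0$ already follows from $\bigcap_n C_n=\emptyset$ by continuity of measure, with no quasi-invariance needed, but this alone does not control the bad set; and for category, a decreasing sequence with empty intersection carries no meagerness information at all, so that case requires a separate mechanism.) In short: the general idea is viable, but the step where measure and category actually enter is missing, and what stands in its place cannot be carried out as stated.
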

	 \begin{proof}
	     By \cite[Lemma 4.5]{toden} $\mathcal{G}_
		\fS$ admits a Borel $3$-coloring restricted to so called non-dominating sets. It is well-known (see, e.g., \cite{bartoszynski1995set}) that there are non-dominating sets that are co-meager, and that for any $\mu$ there are such $\mu$-measure $1$ sets as well. 	
	 \end{proof}

	For further properties of the shift graph see, e.g., \cite{KST,toden,di2015basis}. 
	
	Let $\mu$ be a Borel probability measure on the space $X$. Recall that a \emph{Borel graphing} on $(X,\mu)$ is a Borel graph on $X$ that can be expressed as the union of the graphs of countably many $\mu$-preserving Borel partial involutions. 
	
	In \cref{sec:Exp} we use the concept of \emph{colored statistics} from \cite{hatamilovaszszegedy}. The definition of this notion involves assigning colors to vertices of graphs in ways which are not necessarily colorings in our sense, i.e., not proper colorings. To overcome this slightly unfortunate terminological issue, we will use the term \emph{color assignment} for such mappings.
	
	\section{Large chromatic number from Galvin-Prikry}

	\begin{proposition}
		\label{pr:boundeddegree} Assume that $n\geq 3$ is given. There exists a bounded degree Borel graph $\mathcal{H}'_n$ on $[\N]^\N \times \N$ such that for each Borel set $B \subseteq [\N]^\N$ we have \[\chi_B( \restriction{\SG}{B}) \leq 3 \iff \chi_B(\restriction{\mathcal{H}'_n}{B \times \N}) 
		\leq n.\]
	\end{proposition}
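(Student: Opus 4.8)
The plan is to split each vertex of $\restriction{\SG}{B}$ into as many ``ports'' as it has neighbors, making every port have degree one in the ``shift part'', and then to attach finite gadgets that (i) force all ports of a single vertex to receive one color and (ii) force that color to lie in a triple of ``admissible colors'' which is transported unchanged along the edges of $\SG$. Recall that the neighbors of $x$ in $\SG$ are $\mathcal{S}(x)$ together with the sets $\{k\}\cup x$ for $k<\min x$, so $\deg_{\SG}(x)=\min(x)+1$; in $\mathcal{H}'_n$ the point $x$ is represented by the ports $(x,0),\dots,(x,\min x)$, while the (finitely many, depending only on $\min x$) auxiliary vertices of the gadgets attached to $x$ occupy the higher coordinates of $\{x\}\times\N$. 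The $\SG$-edge $\{x,\mathcal{S}(x)\}$ is routed to the single edge $\{(x,0),(\mathcal{S}(x),\min x+1)\}$ of $\mathcal{H}'_n$ (note $\min x+1\le\min\mathcal{S}(x)$, so this is a genuine port of $\mathcal{S}(x)$), and one checks each port lies on exactly one such shift edge. We glue to $x$: (a) a chain of $\min x$ copies of $K_{n+1}$ with one edge removed, the $j$-th copy having $(x,j-1)$ and $(x,j)$ as its two nonadjacent vertices; (b) for each port $(x,j)$ a clique $Q_{x,j}\cong K_{n-3}$ joined completely to $(x,j)$; (c) further copies of $K_{n+1}$-minus-an-edge that identify, color-set for color-set, $Q_{x,j}$ with $Q_{x,j+1}$ for each $j$, and $Q_{x,0}$ with $Q_{\mathcal{S}(x),\min x+1}$. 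Since all this is a fixed finite pattern determined by $\min x$ and by the identity of $\mathcal{S}(x)$, $\mathcal{H}'_n$ is Borel; and since every vertex lies in at most three of the equality gadgets and has at most $O(n)$ other incident edges, its degree is bounded by a constant depending only on $n$.

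For the implication ``$\chi_B(\restriction{\mathcal{H}'_n}{B\times\N})\le n \Rightarrow \chi_B(\restriction{\SG}{B})\le 3$'' I would use the elementary observation that in any proper $n$-coloring of $K_{n+1}$ with one edge $\{u,v\}$ removed one has $c(u)=c(v)$ (the remaining $n-1$ vertices form a $K_{n-1}$, hence use $n-1$ distinct colors, and $u,v$ are each adjacent to all of them). Hence, for a Borel proper $n$-coloring $c$ of $\restriction{\mathcal{H}'_n}{B\times\N}$ and $x\in B$: by (a) all ports of $x$ carry one color $\bar c(x)$; by (b) this color avoids the $n-3$ distinct colors on $Q_{x,0}$, so $\bar c(x)$ lies in the triple $T_x:=\{0,\dots,n-1\}\setminus\{\text{colors on }Q_{x,0}\}$; and by (c) the same triple arises from each $Q_{x,j}$, and $T_x=T_{\mathcal{S}(x)}$ whenever $\{x,\mathcal{S}(x)\}$ is an edge of $\restriction{\SG}{B}$, so $x\mapsto T_x$ is constant on each connected component of $\restriction{\SG}{B}$. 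As the shift edge forces $\bar c(x)\ne\bar c(\mathcal{S}(x))$ along $\restriction{\SG}{B}$, the map assigning to $x$ the rank of $\bar c(x)$ in the increasing enumeration of $T_x$ is a Borel proper $3$-coloring of $\restriction{\SG}{B}$.

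For the converse, given a Borel proper $3$-coloring $\gamma\from B\to\{0,1,2\}$ of $\restriction{\SG}{B}$, I would color every port $(x,j)$ by $\gamma(x)$, color each clique $Q_{x,j}$ by one fixed bijection onto $\{3,\dots,n-1\}$ (so all these cliques use the same color set, every identification in (c) is satisfied, and $\gamma(x)\in\{0,1,2\}$ remains disjoint from that set), and color the interior $K_{n-1}$ of every equality gadget — whose two distinguished vertices now carry equal colors — by the $n-1$ remaining colors. An edge-by-edge check (including the ``half-present'' gadgets lying partly in some $\{x\}\times\N$ with $\mathcal{S}(x)\notin B$, which become copies of $K_n$) shows this is a proper Borel $n$-coloring of $\restriction{\mathcal{H}'_n}{B\times\N}$.

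The delicate point, and where I expect most of the work, is arranging (c) so that the admissible triple $T_x$ is simultaneously the same for all ports of $x$ and transported without change along shift edges, all the while keeping every degree bounded: one cannot glue the cliques $Q_{x,j}$ around a high-$\SG$-degree vertex $x$ to a common ``master'' clique, since that would give that clique unbounded degree, so the color-set identifications must be threaded through the individual ports and through the single shift edge at each of them. The bounded-degree claim then reduces to verifying that each auxiliary vertex participates in only boundedly many gadgets — e.g.\ a vertex of $Q_{x,j}$ lies in at most two ``$Q_{x,j}\leftrightarrow Q_{x,j\pm1}$'' gadgets and in one ``cross-shift-edge'' gadget — which one reads off from the routing in (c).
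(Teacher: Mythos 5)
Your construction is correct, and its skeleton coincides with the paper's: split each vertex of $\SG$ into one port per neighbor (so every port carries exactly one shift edge) and glue chains of $K_{n+1}$-minus-an-edge gadgets to force all ports of a vertex to share one color --- the paper's gadget $F_{n,d}$ (copies of $K_{n-1}$ with consecutive ports joined to all of them) is exactly such a chain, with maximum degree $2(n-1)$. Where you genuinely diverge is in the implication $\chi_B(\restriction{\mathcal{H}'_n}{B\times\N})\le n \Rightarrow \chi_B(\restriction{\SG}{B})\le 3$: the paper stops once it has the induced map $\bar c$, observes that it is a proper Borel $n$-coloring of $\restriction{\SG}{B}$, and then invokes \cite[Theorem 5.1]{KST} (finite Borel chromatic number of the shift graph implies Borel chromatic number $\le 3$) to upgrade it to a $3$-coloring; you instead add the cliques $Q_{x,j}$ and the color-set-transport gadgets so that the admissible triple $T_x$ is Borel, constant on $\SG$-components of $B$, and contains $\bar c(x)$, whence ranking $\bar c(x)$ inside $T_x$ gives the $3$-coloring directly. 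Both routes work: the paper's is shorter and has smaller degrees, while yours is self-contained in that it avoids the KST reduction theorem, at the price of heavier gadgetry. If you write yours up, make two points explicit: for $n=3$ the cliques $Q_{x,j}\cong K_0$ are empty and $T_x=\{0,1,2\}$, so the extra layer degenerates harmlessly; and the ``identify color-set for color-set'' step should be stated as one $K_{n+1}$-minus-an-edge gadget per matched pair of vertices of the two cliques, which is precisely what keeps each clique vertex in only boundedly many gadgets, as you indicate at the end.
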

	We will need some simple gadgets.
	\begin{lemma}
		\label{l:gadget}
		Let $n \geq 3$ and $d\geq 1$. There exist an $l_{n,d} \geq d$ and a finite graph $F_{n,d}$ with $V(F_{n,d})=\{0,\dots,l-1\}$ such that 
		\begin{itemize}
			
			\item the maximum degree of $F_{n,d}$ is $2(n-1)$,
			\item if $c$ is an $n$-coloring of $F_{n,d}$, then $c(0)=\dots=c(d-1)$,
			\item there exists an $n$-coloring $c_{n,d}$ of $F_{n,d}$ with $c(0)=\dots=c(d-1)=0$.
		\end{itemize}
		
	\end{lemma}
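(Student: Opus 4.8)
The plan is to obtain $F_{n,d}$ by chaining $d-1$ copies of a single small ``forcing gadget'' along a path. The gadget I would use is the diamond $D_n := K_{n+1}$ with one edge $\{u,v\}$ deleted: besides the two ``ports'' $u$ and $v$ it has $n-1$ further vertices $w_1,\dots,w_{n-1}$, each joined to $u$, to $v$, and to one another, while $u$ and $v$ are non-adjacent. The point to establish first is that $D_n$ \emph{forces its ports to receive the same color}: in any proper $n$-coloring the clique $\{w_1,\dots,w_{n-1}\}$ uses $n-1$ distinct colors, so both $u$ and $v$ — each adjacent to all of the $w_i$ — are forced onto the one remaining color, whence $c(u)=c(v)$. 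Conversely, $D_n$ has an $n$-coloring sending $u,v \mapsto 0$ and $w_i \mapsto i$. Note also that in $D_n$ the ports have degree $n-1$ and each $w_i$ has degree $n$.

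Next I would assemble $F_{n,d}$. Set $l_{n,d} := d + (d-1)(n-1)$ and $V(F_{n,d}) = \{0,\dots,l_{n,d}-1\}$. For each $i \in \{0,\dots,d-2\}$ glue in a copy $D_n^{(i)}$ of the gadget whose two ports are the vertices $i$ and $i+1$ and whose $n-1$ internal vertices form a fresh block of labels in $\{d,\dots,l_{n,d}-1\}$, the blocks being pairwise disjoint; let the edge set of $F_{n,d}$ be the union of the edge sets of these $d-1$ copies. Thus two distinct copies share at most one vertex, always a port. (For $d=1$ there are no copies and $F_{n,1}$ is a single vertex.)

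It then remains to verify the three bullet points. For the degree bound: an internal vertex lies in a single copy and has degree $n$; a port $i$ with $1\le i\le d-2$ lies in exactly the two copies $D_n^{(i-1)}$ and $D_n^{(i)}$, hence has degree $2(n-1)$; the ports $0$ and $d-1$ have degree $n-1$. Since $n\le 2(n-1)$ for $n\ge 3$, the maximum degree is at most $2(n-1)$ — with equality as soon as $d\ge 3$; for $d\le 2$ one may throw in a disjoint copy of the star $K_{1,2(n-1)}$ (which is $2$-colorable and interacts with nothing) if exact equality is wanted. For the forcing property: applying the gadget fact to each $D_n^{(i)}$ gives $c(i)=c(i+1)$ for all $i<d-1$, hence $c(0)=\dots=c(d-1)$. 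For existence of $c_{n,d}$: color every $j<d$ by $0$ and color the internal vertices of each copy bijectively by $1,\dots,n-1$; within a copy this is proper (ports get $0$, internal vertices get pairwise distinct nonzero colors, and the unique non-edge among the gadget's vertices is $\{i,i+1\}$), and since distinct copies meet only in ports there is no conflict across copies, so $c_{n,d}$ is a proper $n$-coloring with $c_{n,d}(0)=\dots=c_{n,d}(d-1)=0$.

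There is no deep difficulty here; the one design choice that matters is the \emph{path} shape of the chaining. A ``star'' of gadgets all sharing the vertex $0$ would equally force $c(0)=\dots=c(d-1)$ but would push $\deg(0)$ up to $(d-1)(n-1)$, destroying the degree bound; routing the identifications along a path keeps every port in at most two gadgets. The only other thing to be careful about is that the chord among the $w_i$'s is genuinely needed for the forcing argument — without it $D_3$ is merely a $4$-cycle and imposes no relation between its two non-adjacent vertices.
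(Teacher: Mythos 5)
Your construction is exactly the paper's: the paper takes $d-1$ disjoint copies of $K_{n-1}$ on the vertices $\{d,\dots,l-1\}$ and joins both $i$ and $i+1$ to all of the $i$-th copy, which is precisely your chain of diamonds $K_{n+1}$ minus an edge with ports $i,i+1$, with the same $l=d+(d-1)(n-1)$. The paper leaves the verification as "easy to verify"; your spelled-out argument (clique forces the two ports onto the one remaining color, path-shaped chaining keeps every port in at most two gadgets) is correct and is the intended one.
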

	
	\begin{proof}
		Let $l=d+(d-1)(n-1)$ and define $F_{n,d}$ on $l \setminus d$ so that it consists of $d-1$-many disjoint copies $(K_i)_{i<d-1}$ of the complete graph on $n-1$ vertices. For each $i<d-1$ connect $i$ and $i+1$ to all the vertices of $K_i$. It is easy to verify that $F_{n,d}$ has the required properties. 
	\end{proof}
	\begin{proof}[Proof of \cref{pr:boundeddegree}]
		Define $\mathcal{H}'_{n}$ on $[\N]^\N \times \N$ as follows. Let $(x,i), (y,j) \in [\N]^\N \times \N$ be adjacent if 
		\begin{itemize}
			\item $\mathcal{S}(x)=y$, $i=\deg_{\SG}(x)-1$, and $j=\min x$ or 
			\item as above, with the roles of $x$ and $y$ reversed or
			\item $x=y$ and $(i,j) \in F_{n,d}$, where $d=\deg_{\SG}(x)$. 
		\end{itemize} 
		
		Since the function $\deg_{\SG}$ is continuous on $[\N]^\N$, the graph $\mathcal{H}'_n$ is Borel. We show that it has the required properties. First, it is clear from the definition that the degrees of $\mathcal{H}'_n$ are bounded by $2n-1$.  
		
		Let $B \subseteq [\N]^\N$ be Borel, and assume that $\chi_{B}(\restriction{\SG}{B}) \leq 3$ is witnessed by the coloring $c$. Then it is easy to define a Borel $n$-coloring of $\restriction{\mathcal{H}'_n}{B \times \N}$, e.g., let \[c'(x,i)= \begin{cases}
		c(x), \text{ if $i<\deg_{\SG}(x)$},\\
		c_{n,d}(i)+c(x) \mod n, \text{ if } l_{n,d}> i \geq \deg_{\SG}(x),\\
		0, \text{ otherwise}.
		\end{cases}\]
		
		Conversely, if $c'$ is a Borel $n$-coloring of $\restriction{\mathcal{H}'_n}{B \times \N}$, then, by the properties of $F_{n,d}$ we have that $c'$ is constant on $\{(x,i):i<\deg_{\SG}(x)\}$. Let $c(x)$ be this constant value for every $x \in B$. Then $c$ is Borel, and if $\mathcal{S}(x)=y$ and $j=\min x$ then the pair $(x,\deg_{\SG}(x)-1)$ and $(y,j)$  form an edge in $\mathcal{H}'_n$. As $j<\deg_{\SG}(y)$, it follows that $c(x) \neq c(y)$. So $c$ is a Borel finite coloring of $\restriction{\SG}{B}$, which yields the existence of a Borel $3$-coloring by \cite[Theorem 5.1]{KST}. 
	\end{proof}

	Using a less trivial argument, we can make the graph acyclic. In order to do this, we need a collection of acyclic graphs with large expansion.
	The proof of the following theorem is deferred to \cref{sec:Exp}.

	\begin{theorem}
		\label{t:auxiliary}
		Let $k \geq 1$ and $n \geq 3$. There exist disjoint Borel graphs $\mathcal{G}_j$ for $j<k$ on a probability measure space $(Y,\mu)$ such that 
		\begin{enumerate}
			\item $\bigcup_{j<k} \mathcal{G}_{j}$ is acyclic and has bounded degree.
			\item \label{c:exp} For every $j<k$ if $B, B' \subseteq Y$ are measurable and $\mu(B),\mu(B') \geq \frac{1}{n}$ then there exist $z \in B$ and $z' \in B'$ that are adjacent in $\mathcal{G}_j$.
		\end{enumerate}
	\end{theorem}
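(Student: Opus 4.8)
The plan is to realise the $\mathcal{G}_j$ as the colour classes of a Borel edge-$k$-coloured \emph{graphing} obtained as a local--global limit, in the sense of Hatami--Lov\'asz--Szegedy \cite{hatamilovaszszegedy}, of finite graphs built from random $\Delta$-regular graphs. What makes this possible is that both features we need of such a limit are read off from the radius-$r$ coloured-neighbourhood statistics for a \emph{fixed} $r$, and hence pass to local--global limits: \emph{(i)} bounded degree together with a.e.\ ball of radius $r$ being a tree, which as $r\to\infty$ yields acyclicity; and \emph{(ii)} the vertex expansion quantified below.

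\emph{Finite blocks.} Fix $n$ and $k$. Using the expander mixing lemma and the standard bound $\lambda_2=O(\sqrt{\Delta})$ for a uniformly random $\Delta$-regular graph, I would first choose $\Delta=\Delta(n)$ so large that, with high probability as $m\to\infty$, a random $\Delta$-regular graph $G$ on $[m]$ satisfies $|N_G(S)|>\bigl(1-\tfrac1{2(n+1)}\bigr)m$ whenever $|S|\ge\tfrac{m}{2(n+1)}$. For each $g\in\N$ take $k$ independent such graphs $G^{0},\dots,G^{k-1}$ on $[m]$; with high probability their union is simple apart from $O_{k,\Delta,g}(1)$ repeated pairs and has only $O_{k,\Delta,g}(1)$ cycles of length $\le g$ (the standard short-cycle count in the configuration model, valid for \emph{fixed} $g$ as $m\to\infty$). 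Deleting a hitting set of size $O_{k,\Delta,g}(1)$ meeting all repeated pairs and all cycles of length $\le g$ produces, for $m$ large, a simple edge-$k$-coloured graph $H_{g}=\bigsqcup_{j<k}H_{g}^{j}$ of girth $>g$ and maximum degree $\le k\Delta$ on a vertex set $V_g$ of size $m-O_{k,\Delta,g}(1)$; removing $O(1)$ vertices creates no cycle, and with the slack built into the constants preserves the bound $|S|\ge\tfrac{|V_g|}{n+1}\Rightarrow|N_{H_g^{j}}(S)|>\bigl(1-\tfrac1{n+1}\bigr)|V_g|$ for each $j<k$.

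\emph{The limit.} Choosing $m=m(g)\to\infty$ large enough and passing to a local--global convergent subsequence of the edge-$k$-coloured graphs $(H_g)_g$ (possible because the coloured-neighbourhood statistics lie in a compact space), I would invoke \cite{hatamilovaszszegedy} to represent the limit by a graphing; encoding the edge-colouring in the routine way --- subdivide each edge, colour the new vertex by its class, and undo this in the limit --- gives an edge-$k$-coloured graphing $\mathcal{G}=\bigsqcup_{j<k}\mathcal{G}_j$ on a standard probability space $(Y,\mu)$ with pairwise edge-disjoint Borel $\mathcal{G}_j$. The degree bound $k\Delta$ is visible in the radius-$1$ statistics, so after discarding a null set $\mathcal{G}$ has bounded degree. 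For acyclicity: for each fixed $r$, once $g>2r$ every radius-$r$ ball of $H_g$ is a tree (there are only finitely many such balls up to isomorphism, so this is a constant-colour, radius-$r$ statistic concentrated on a clopen set), hence $\mu$-a.e.\ radius-$r$ ball of $\mathcal{G}$ is a tree; intersecting over $r$ and removing a null set renders $\bigcup_{j<k}\mathcal{G}_j$ acyclic. This gives~(1).

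\emph{Expansion, and the main difficulty.} It suffices to prove that for every $j<k$ and every measurable $B\subseteq Y$ with $\mu(B)\ge\tfrac1n$ one has $\mu(N_{\mathcal{G}_j}(B))>1-\tfrac1n$: for then, if also $\mu(B')\ge\tfrac1n$, we get $\mu(N_{\mathcal{G}_j}(B)\cap B')\ge\mu(N_{\mathcal{G}_j}(B))+\mu(B')-1>0$, so some $z'\in B'$ has a $\mathcal{G}_j$-neighbour $z\in B$. Suppose this fails and put $B'=Y\setminus N_{\mathcal{G}_j}(B)$, so $\mu(B')\ge\tfrac1n$ and no $\mathcal{G}_j$-edge joins $B$ and $B'$; then the radius-$1$ statistic $q:=\mu(\{y\in B:\ y\text{ has no }\mathcal{G}_j\text{-neighbour in }B'\})$ equals $\mu(B)$. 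By local--global convergence there are colour assignments of the $H_g$ --- producing $B_g,B_g'\subseteq V_g$ --- with $|B_g|/|V_g|,|B_g'|/|V_g|\to\mu(B),\mu(B')$ and $q_g:=|\{v\in B_g:\ v\text{ has no }H_g^{j}\text{-neighbour in }B_g'\}|/|V_g|\to q$. For $g$ large $|B_g'|>\tfrac{|V_g|}{n+1}$, and writing $C_g$ for the set counted by $q_g$, no $H_g^{j}$-edge goes from $C_g$ into $B_g'$, so $N_{H_g^{j}}(C_g)\cap B_g'=\emptyset$ and $|N_{H_g^{j}}(C_g)|\le|V_g|-|B_g'|<\bigl(1-\tfrac1{n+1}\bigr)|V_g|$; by the finite expansion this forces $|C_g|<\tfrac{|V_g|}{n+1}$, i.e.\ $q_g<\tfrac1{n+1}$. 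Letting $g\to\infty$ gives $\mu(B)=q\le\tfrac1{n+1}<\tfrac1n$, a contradiction. The genuinely delicate step here is the invocation of the Hatami--Lov\'asz--Szegedy theory with an edge-colouring: checking that the limit is a bona fide graphing carrying a Borel $k$-colouring, and that the dictionary between measurable subsets of $Y$ and colour assignments of the $H_g$ is exactly what local--global convergence controls --- this is what makes the expansion transfer to the limit. The finite expander input and the harmlessness of deleting $O(1)$ vertices are routine.
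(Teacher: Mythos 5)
Your proposal is correct and follows essentially the same route as the paper: random $\Delta$-regular graphs made expanding via the expander mixing lemma and Friedman's spectral bound, then transferred to a graphing by Hatami--Lov\'asz--Szegedy local--global convergence, with both acyclicity and expansion read off from finite-radius coloured statistics. The only (harmless) deviations are in execution --- you kill short cycles by deleting $O(1)$ vertices where the paper takes a positive-probability large-girth sequence, and you run the expansion transfer contrapositively through vertex expansion where the paper counts edges between $B_\ell$ and $B'_\ell$ directly.
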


	Using this result, and a product type construction, we will be able to assemble the promised acyclic graph. Observe also that it implies \cref{pr:expanderintro} by letting $\frac{1}{n}<\varepsilon$ and $\mathcal{G}=\mathcal{G}_0$.  
	
	\begin{proposition}
		\label{pr:acyclic}
		Let $\mathcal{G}$ be a Borel graph on a space $X$ with a Borel edge $k$-coloring and $\chi_B(\mathcal{G})>n$. Fix the graphs $(\mathcal{G}_{j})_{j<k}$ and $(Y,\mu)$ provided by \cref{t:auxiliary} applied to $k$ and $n$. There exists a acyclic bounded degree Borel graph $\mathcal{H}$ on $X \times Y$ with the property that for each $B \subseteq X$ Borel 
		\[\chi_B( \restriction{\mathcal{G}}{B}) \leq n \iff \chi_B(\restriction{\mathcal{H}}{B \times Y}) 
		\leq n.\]
	\end{proposition}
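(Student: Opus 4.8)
The plan is to build $\mathcal{H}$ as an acyclic subgraph of the product graph $\mathcal{G} \times \bigcup_{j<k}\mathcal{G}_j$, using the edge $k$-coloring of $\mathcal{G}$ to route each edge of $\mathcal{G}$ through the corresponding expander $\mathcal{G}_j$. Concretely, let $\gamma\from \mathcal{G}\to k$ be the given Borel edge $k$-coloring. Declare $(x,y)$ and $(x',y')$ adjacent in $\mathcal{H}$ exactly when $\{x,x'\}\in\mathcal{G}$, $\{y,y'\}\in\mathcal{G}_{\gamma(\{x,x'\})}$, and in addition some deterministic Borel "pairing'' condition holds that cuts the fibered product down to an acyclic graph while preserving enough edges. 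The point of insisting that an $\mathcal{G}$-edge of color $j$ only connects points whose second coordinates are $\mathcal{G}_j$-adjacent is that a cycle in $\mathcal{H}$ projects to a closed walk in $\mathcal{G}$, and since distinct $\mathcal{G}$-edges incident to a vertex get distinct colors, any such projected walk that is "short'' is forced to backtrack; making this precise is how acyclicity will be extracted. (One clean way: since each $\mathcal{G}_j$ is acyclic, and the colors are proper on $\mathcal{G}$, a minimal cycle in the naive product already cannot exist if one is slightly more careful — e.g. work with $\mathcal{H}$ on $X\times Y$ where one first passes to a Borel forest decomposition; the forthcoming proof in the paper presumably does exactly this routing, so I will follow that skeleton.) Boundedness of degree is immediate: the degree of $(x,y)$ in $\mathcal{H}$ is at most $\deg_{\mathcal{G}}(x)\cdot\max_j\deg_{\mathcal{G}_j}(y)$, both bounded.

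Next I would verify the forward implication $\chi_B(\restriction{\mathcal{G}}{B})\le n \Rightarrow \chi_B(\restriction{\mathcal{H}}{B\times Y})\le n$. Given a Borel $n$-coloring $c$ of $\restriction{\mathcal{G}}{B}$, simply set $c'(x,y)=c(x)$. If $(x,y),(x',y')$ are $\mathcal{H}$-adjacent then $\{x,x'\}\in\mathcal{G}$ (and $x\ne x'$, since $\mathcal{G}$-edges are between distinct points), so $c(x)\ne c(x')$; hence $c'$ is a proper Borel $n$-coloring. This direction uses nothing about the $\mathcal{G}_j$'s.

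The substantive direction is the converse: from a Borel $n$-coloring $c'$ of $\restriction{\mathcal{H}}{B\times Y}$ produce one of $\restriction{\mathcal{G}}{B}$. For each $x\in B$ and each color $t<n$ consider the fiber set $A^x_t=\{y\in Y : c'(x,y)=t\}$; these partition $Y$, so some $t$ has $\mu(A^x_t)\ge\frac1n$. Define $c(x)$ to be the least such majority color; this is Borel in $x$. Suppose toward a contradiction that $\{x,x'\}\in\mathcal{G}$ with $c(x)=c(x')=t$, and let $j=\gamma(\{x,x'\})$. Then $\mu(A^x_t),\mu(A^{x'}_t)\ge\frac1n$, so by \cref{t:auxiliary}\eqref{c:exp} there exist $y\in A^x_t$ and $y'\in A^{x'}_t$ with $\{y,y'\}\in\mathcal{G}_j$. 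By construction $(x,y)$ and $(x',y')$ are $\mathcal{H}$-adjacent, yet $c'(x,y)=t=c'(x',y')$, contradicting that $c'$ is proper. Hence $c$ properly $n$-colors $\restriction{\mathcal{G}}{B}$, and $\chi_B(\restriction{\mathcal{G}}{B})\le n$.

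The main obstacle is the acyclicity bookkeeping in the definition of $\mathcal{H}$: the full fibered product over the edge coloring still contains even cycles (as the excerpt notes for $\mathcal{H}'_n\times\mathcal{G}$), so one must thin it out — exploiting that $\bigcup_{j<k}\mathcal{G}_j$ is acyclic together with the properness of $\gamma$ on $\mathcal{G}$ — without destroying the expansion-based argument above, which only needs that for every pair $y\in A^x_t$, $y'\in A^{x'}_t$ with $\{y,y'\}\in\mathcal{G}_j$ the corresponding product edge survives in $\mathcal{H}$. Arranging "every such edge survives'' and "$\mathcal{H}$ is acyclic'' simultaneously is the delicate point: the resolution is that a putative cycle in the fibered product would, upon projecting to the first coordinate, give a closed walk in $\mathcal{G}$ all of whose consecutive edges are distinctly $\gamma$-colored only when they are genuinely distinct edges, forcing the walk to repeat an edge; lifting this repetition back through the acyclic graph $\bigcup_j\mathcal{G}_j$ shows the second coordinates must also retrace, collapsing the cycle. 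I would carry this out as a short combinatorial lemma about closed walks, then conclude. Everything else (Borelness of $\mathcal{H}$ from Borelness of $\mathcal{G},\gamma,\deg_{\mathcal{G}}$ and the $\mathcal{G}_j$; bounded degree) is routine.
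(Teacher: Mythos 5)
Your construction is the right one and your two colorability directions are exactly the paper's: color $\mathcal{H}$ by $c'(x,y)=c(x)$ for the forward implication, and for the converse take the majority color of the fiber and use property (2) of \cref{t:auxiliary} to find a surviving product edge between two majority fibers of the same color. That part is complete and correct.

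The genuine gap is acyclicity, which you explicitly leave open, and your sketch of how to close it would not work as stated. First, no additional ``pairing''/thinning condition is needed: the fibered product itself, i.e.\ $(x,y)\,\mathcal{H}\,(x',y')$ iff $x\mathcal{G}x'$, $\gamma(x,x')=j$ and $y\mathcal{G}_j y'$, is already acyclic, and this is essential, because (as you note yourself) the expansion argument needs \emph{every} such edge to survive, so any further thinning would break the converse direction. Second, your proposed argument projects a putative cycle to the first coordinate and claims the resulting closed walk in $\mathcal{G}$ is ``forced to repeat an edge''; this is unjustified, since $\mathcal{G}$ is not assumed acyclic (and in the intended application $\mathcal{H}'_n$ contains complete graphs from the gadgets), so the $x_i$ may trace a genuine cycle with no backtracking. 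The correct argument goes through the \emph{second} coordinate: the $y_i$ form a closed walk in $\bigcup_{j<k}\mathcal{G}_j$, which is acyclic, so $y_{i-1}=y_{i+1}$ for some $i$; injectivity of the cycle then forces $x_{i-1}\neq x_{i+1}$, so $\{x_{i-1},x_i\}$ and $\{x_i,x_{i+1}\}$ are distinct edges at $x_i$ and hence get different colors $j\neq j'$ under the proper edge coloring; but then the single edge $\{y_{i-1},y_i\}=\{y_{i+1},y_i\}$ would belong to both $\mathcal{G}_j$ and $\mathcal{G}_{j'}$, contradicting their disjointness. Note that this uses the disjointness of the $\mathcal{G}_j$ from \cref{t:auxiliary}, a hypothesis your sketch never invokes.
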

	
	Our graph $\mathcal{H}$ will be an acyclic subgraph of the (categorical) product of $ \mathcal{G} \times (\bigcup_{j} \mathcal{G}_j)$. The expansion property will ensure the large chromatic number of $\mathcal{G}$, while $\mathcal{G}_j$ together with a proper edge-coloring of $\mathcal{H}$ will be used to show that the graph is acyclic.
	
	\begin{proof}

		Fix a Borel edge $k$-coloring $e$ of the graph $\mathcal{G}$. The graph $\mathcal{H}$ is defined on the space $X \times Y$ as follows: let \[(x,y)\mathcal{H}(x',y') \iff x\mathcal{G}x', e(x,x')=j \text{ and } y\mathcal{G}_jy'.\]
		It is clear that $\mathcal{H}$ has degrees bounded by $kd$, where $d$ is the maximum degree in $\mathcal{G}_j$ for $j<k$. 
		
		\begin{lemma}
			$\mathcal{H}$ is acyclic.
		\end{lemma}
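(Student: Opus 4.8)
The plan is to argue by contradiction, pushing a hypothetical cycle of $\mathcal{H}$ down to the $Y$-coordinate and exploiting acyclicity there. Suppose $\mathcal{H}$ has a cycle, on pairwise distinct vertices $(x_0,y_0),\dots,(x_{m-1},y_{m-1})$ with $m\geq 3$, all indices read modulo $m$. By the definition of $\mathcal{H}$, for every $i$ we have $x_i\mathcal{G}x_{i+1}$; setting $j_i:=e(x_i,x_{i+1})$, we also have $y_i\mathcal{G}_{j_i}y_{i+1}$. Thus $x_0,\dots,x_{m-1},x_0$ is a closed walk in $\mathcal{G}$ along which the proper edge-colouring $e$ takes the values $j_0,\dots,j_{m-1}$, while $y_0,\dots,y_{m-1},y_0$ is a closed walk in $\bigcup_{j<k}\mathcal{G}_j$, which is a forest by \cref{t:auxiliary}.

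The first step is the elementary fact that a closed walk of positive length in a forest must backtrack, i.e.\ some index $i$ has $y_{i-1}=y_{i+1}$. I would justify this by rooting the tree containing the walk and choosing an occurrence $y_i$ of maximal depth among $y_0,\dots,y_{m-1}$: since neighbours in a tree differ in depth by exactly one, both $y_{i-1}$ and $y_{i+1}$ must then be the parent of $y_i$. The walk has positive length because the graphs $\mathcal{G}_j$ are loopless, so $y_i\neq y_{i+1}$ for all $i$; hence such an index $i$ exists.

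Finally I would derive the contradiction from this backtracking index. Since $y_{i+1}=y_{i-1}$, the edge $\{y_{i-1},y_i\}$ lies in $\mathcal{G}_{j_{i-1}}$ and, being literally the same edge as $\{y_i,y_{i+1}\}$, also in $\mathcal{G}_{j_i}$; as the $\mathcal{G}_j$ are pairwise disjoint, this forces $j_{i-1}=j_i$. But the $\mathcal{G}$-edges $\{x_{i-1},x_i\}$ and $\{x_i,x_{i+1}\}$ share the vertex $x_i$, so, $e$ being a proper edge-colouring, the equality $j_{i-1}=j_i$ of their colours forces $\{x_{i-1},x_i\}=\{x_i,x_{i+1}\}$, whence $x_{i-1}=x_{i+1}$ (it cannot be that $x_{i-1}=x_i$). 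Then $(x_{i-1},y_{i-1})=(x_{i+1},y_{i+1})$, and because $m\geq 3$ the indices $i-1$ and $i+1$ are distinct modulo $m$, contradicting distinctness of the cycle's vertices. The only non-bookkeeping ingredient is the backtracking lemma for forests, which is standard; the subtle points are that $m\geq 3$ (a genuine cycle, not a doubled edge) is really needed so that the coincidence above is a contradiction, and that disjointness of the $\mathcal{G}_j$ is essential — it is exactly what rules out a cycle of $\mathcal{H}$ alternating between two of the factor graphs along a single pair of $Y$-vertices. I do not anticipate any real obstacle.
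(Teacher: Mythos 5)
Your proof is correct and follows essentially the same route as the paper's: project the cycle to the $Y$-coordinate, use acyclicity of $\bigcup_{j<k}\mathcal{G}_j$ to find a backtracking index $i$ with $y_{i-1}=y_{i+1}$, and then play the disjointness of the $\mathcal{G}_j$ against the properness of the edge colouring $e$ and the injectivity of the cycle. The only difference is cosmetic — the paper derives the contradiction as a violation of disjointness (injectivity forces $e(x_{i-1},x_i)\neq e(x_i,x_{i+1})$, yet the common $Y$-edge lies in both $\mathcal{G}_j$ and $\mathcal{G}_{j'}$), whereas you run the same chain in reverse to contradict injectivity; you also spell out the standard backtracking fact for forests, which the paper takes for granted.
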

		\begin{proof}
			If $(x_0,y_0),\dots,(x_n,y_n)=(x_0,y_0)$ was an injective cycle in $\mathcal{H}$, then $y_0,\dots,y_n$ would be a closed walk in $\bigcup \mathcal{G}_j$. Since $\bigcup \mathcal{G}_j$ is acyclic, there must be an $i$ with $y_{i-1}=y_{i+1}$. As $e$ is a proper edge coloring of $\mathcal{G}$ and the cycle is injective, it follows that $j=e(x_{i-1},x_{i}) \neq e(x_i,x_{i+1})=j'$. Thus, $\mathcal{G}_j \ni \{y_{i-1},y_i\}=\{y_{i},y_{i+1}\} \in \mathcal{G}_{j'}$, contradicting the assumption on disjointness.
		\end{proof}
		
		Finally, let $B \subseteq X$ be an arbitrary Borel set. If $\chi_B(\restriction{\mathcal{G}}{B}) \leq n$ then the coloring witnessing this can be extended to a Borel $n$-coloring of $\restriction{\mathcal{H}}{B}$, by making it constant on every set of the form $\{x\} \times Y$. 
		
		Conversely, assume that $c'$ is a proper Borel $n$-coloring of $\restriction{\mathcal{H}}{B \times Y}$. Using that $c'$ is Borel and \cite[Theorem 29.26]{kechrisclassical} we obtain that the set \[M=\{(x,i):\mu(\{y \in Y:c'(x,y)=i\}) \geq \frac{1}{n}\}\] is Borel as well. Then there is a Borel map $c:B \to n$ (e.g., letting $c(x)$ be the minimal $i$ with $(x,i) \in M$) such that  \[c(x)=i \implies \mu(\{y \in Y:c'(x,y)=i\}) \geq \frac{1}{n}.\]
		We show that $c$ is a proper $n$-coloring of $\restriction{\mathcal{G}}{B}$. Let $x\mathcal{G}x'$ and $x,x' \in B$. Then, for some $j<k$ we have $e(x,x')=j$. It follows from Property \eqref{c:exp} of $\mathcal{G}_j$ and the definition of $c$ that there are $z \in \{y \in Y:c'(x,y)=c(x)\}$ and $z' \in \{y \in Y:c'(x',y)=c(x')\}$ that are adjacent in $\mathcal{G}_j$. Consequently, $(x,z) \mathcal{G} (x',z')$, so $c(x)=c'(x,z) \neq c'(x',z')=c(x')$.
	\end{proof}
	Now we have the following, which encompasses \cref{t:mainintro}.
	\begin{theorem}
		\label{t:acyclic} Assume that $n\geq 3$ is given. There exists a Polish space $X$ and a bounded degree acyclic Borel graph $\mathcal{H}$ on $[\N]^\N \times X$ such that for each Borel set $B \subset [\N]^\N$ we have \[\chi_B( \restriction{\SG}{B}) \leq 3 \iff \chi_B(\restriction{\mathcal{H}}{B \times X}) 
		\leq n.\]
		
		In particular, codes of Borel subgraphs of $\mathcal{H}$ with Borel chromatic number $\leq n$ form a $\mathbf{\Sigma}^1_2$-complete set.
		
		Moreover, $\chi_{BM}(\mathcal{H}),\chi_\mu(\mathcal{H}) \leq 3$ for any Borel probability measure $\mu$ on $[\N]^\N \times X$.
	\end{theorem}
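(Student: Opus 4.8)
The plan is to obtain $\mathcal{H}$ by composing the two reductions already in hand. First apply \cref{pr:boundeddegree} to produce the bounded degree Borel graph $\mathcal{H}'_n$ on $[\N]^\N\times\N$; as it has degree at most $2n-1$, its line graph is of bounded degree, so by \cite{KST} it carries a Borel proper edge coloring with some finite number $k$ of colors. Taking $B=[\N]^\N$ in \cref{pr:boundeddegree} and using $\chi_B(\SG)=\aleph_0>3$ shows $\chi_B(\mathcal{H}'_n)>n$, so $\mathcal{H}'_n$ is a legitimate input for \cref{pr:acyclic}. Now apply \cref{t:auxiliary} with this $k$ and the given $n$ to get graphs $(\mathcal{G}_j)_{j<k}$ on a standard probability space $(Y,\mu)$, which we may take Polish, and feed everything into \cref{pr:acyclic} with $\mathcal{G}=\mathcal{H}'_n$: this yields an acyclic Borel graph $\mathcal{H}$ of bounded degree on $([\N]^\N\times\N)\times Y$. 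Set $X:=\N\times Y$, a Polish space, and identify $([\N]^\N\times\N)\times Y$ with $[\N]^\N\times X$; this is the required $\mathcal{H}$.

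\textbf{The equivalence.} Fix Borel $B\subseteq[\N]^\N$. \cref{pr:boundeddegree} gives $\chi_B(\restriction{\SG}{B})\le3 \iff \chi_B(\restriction{\mathcal{H}'_n}{B\times\N})\le n$, and \cref{pr:acyclic} applied to the Borel set $B\times\N\subseteq[\N]^\N\times\N$ gives $\chi_B(\restriction{\mathcal{H}'_n}{B\times\N})\le n \iff \chi_B(\restriction{\mathcal{H}}{(B\times\N)\times Y})\le n$. Since $(B\times\N)\times Y=B\times X$ under the identification, composing the two equivalences yields the displayed statement. Bounded degree and acyclicity of $\mathcal{H}$ are inherited verbatim from \cref{pr:acyclic}.

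\textbf{Complexity.} The $\mathbf{\Sigma}^1_2$ upper bound is routine: for a fixed Borel graph, ``$\chi_B\le n$'' of a Borel subgraph coded by $\alpha$ asserts the existence of a real $\beta$ such that $\beta$ is a Borel code of a function into $n$ (a $\mathbf{\Pi}^1_1$ condition on $\beta$) and the coded function is a proper coloring of the subgraph coded by $\alpha$ (a $\mathbf{\Pi}^1_1$ condition on $(\alpha,\beta)$), hence the set of such $\alpha$ is $\mathbf{\Sigma}^1_2$. For hardness, let $\Phi$ send a Borel code for $B\subseteq[\N]^\N$ to a Borel code for the induced — hence genuine — subgraph $\restriction{\mathcal{H}}{B\times X}=\mathcal{H}\cap(B\times X)^2$ of $\mathcal{H}$; since $\mathcal{H}$ is a fixed Borel graph, $\Phi$ can be taken Borel. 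By the displayed equivalence, $\Phi$ is a reduction of $\{B:\chi_B(\restriction{\SG}{B})\le3\}$ to the class of codes of Borel subgraphs of $\mathcal{H}$ with Borel chromatic number $\le n$; as the former set is $\mathbf{\Sigma}^1_2$-hard by the complexity results of \cite{toden}, so is the latter, and together with the upper bound it is $\mathbf{\Sigma}^1_2$-complete.

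\textbf{Measurable colorings, and the main obstacle.} Since $\mathcal{H}$ is an acyclic Borel graph of bounded degree, $\chi_{BM}(\mathcal{H})\le3$ and $\chi_\mu(\mathcal{H})\le3$ for every Borel probability measure $\mu$; alternatively one restricts to a non-dominating comeager (resp. $\mu$-conull) Borel set $N$, on which \cref{cl:triv} together with \cref{pr:boundeddegree} and \cref{pr:acyclic} already furnish a Borel finite coloring of $\restriction{\mathcal{H}}{N\times X}$. I expect two points to carry the real weight. The first is matching the precise form of the hardness result of \cite{toden}: it must be about the induced subgraphs $\restriction{\SG}{B}$, which is exactly the shape the two reductions preserve. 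The second is the passage, in the last item, from the bound $n$ that the reductions directly deliver on a non-dominating set down to the bound $3$ — that is, isolating which property of the shift graph on non-dominating sets, beyond mere acyclicity and bounded degree, is actually responsible for three-colorability. Everything else is a direct assembly of the cited results.
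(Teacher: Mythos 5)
Your assembly of $\mathcal{H}$ and your proofs of the displayed equivalence and of the $\mathbf{\Sigma}^1_2$-completeness are exactly the paper's argument: take $\mathcal{H}'_n$ from \cref{pr:boundeddegree}, a Borel edge $k$-coloring from \cite{KST}, feed these into \cref{t:auxiliary} and \cref{pr:acyclic}, and get hardness from the main theorem of \cite{toden} together with the observation that $B\mapsto B\times X$ is Borel in the codes. For the final clause the paper takes the second of your two routes: it pushes $\mu$ forward to $[\N]^\N$, applies \cref{cl:triv} to obtain a conull (resp.\ comeager) $B$ with $\chi_B(\restriction{\SG}{B})\le 3$, and concludes via the equivalence --- which, as you correctly observe, literally delivers only $\chi_\mu(\mathcal{H}),\chi_{BM}(\mathcal{H})\le n$; the further drop to $3$ is not spelled out in the paper either, so the ``main obstacle'' you flag is a real (if minor) gap that you share with the paper rather than one specific to your write-up. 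Your first route --- invoking the known theorem of Conley and Miller that every locally finite acyclic Borel graph has Baire-measurable and $\mu$-measurable chromatic number at most $3$ --- is a perfectly good way to close it, and is arguably cleaner than trying to extract a $3$-coloring from the reductions.
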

	\begin{proof}
		The graph $\mathcal{H}'_n$ constructed in \cref{pr:boundeddegree} has bounded degrees, so it admits a Borel $k$-edge coloring for some $k$ by \cite[Proposition 4.6]{KST}. Now applying \cref{pr:acyclic} to $n,k$ and this graph yields the graph $\mathcal{H}$ with the desired property. 
		
		The second part of the statement follows from the main result of \cite{toden}, namely that codes of Borel sets for which $\chi_B(\restriction{\SG}{B}) \leq 3$ form a $\mathbf{\Sigma}^1_2$-complete set and the observation that the map $B \mapsto B \times X$ is Borel in the codes.  
		
		In order to see the last statement, using \cref{cl:triv} for the pushforward of a $\mu$ on $[\N]^\N \times X$ w.r.t. the projection map to $[\N]^\N$, we obtain $\chi_\mu(\mathcal{H})=3$. Similarly, $\chi_{BM}(\mathcal{H})=3$ follows by taking a co-meager set $B \subseteq [\N]^\N$ with $\chi_{BM}(\restriction{\mathcal{G}_\fS}{B}) \leq 3$ and noting that $B \times X$ is co-meager.
	\end{proof}

	\section{Proof of \cref{t:auxiliary}}\label{sec:Exp}
	
	There are three technical ingredients in the proof: (1) we use the expander mixing lemma, (2) we use a standard fact that random regular graphs are expanding asymptotically almost surely, and (3) we use the theory of \emph{local-global} limits to find the desired graphing.
	
	Given a finite $d$-regular graph $G$, we write $\lambda_G$ for the second largest absolute value of the eigenvalues of the adjacency matrix (as $G$ is $d$-regular, the largest is always $d$). 
	For $A,B\subseteq V(G)$, write $e(A,B)$ for the number of edges from $A$ to $B$ (where edges spanning vertices from $A\cap B$ are counted twice). The next classical lemma says that if $\lambda_G$ is small, then the number of edges between two sets of vertices is not far from the number one would expect based on their relative size.
	
	\begin{lemma}[Expander Mixing Lemma Corollary~9.2.5 in \cite{alon2016probabilistic}]\label{lm:EML}
		Let $G$ be a $d$-regular graph and $B,B'\subseteq V(G)$.
		Then we have
		$$\left|e(B,B')-\frac{d|B||B'|}{|V(G)|}\right|\le \lambda_G \sqrt{|B||B'|}.$$
	\end{lemma}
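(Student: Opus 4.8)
The plan is to give the standard spectral proof. Let $N=|V(G)|$ and let $A$ be the adjacency matrix of $G$. Since $A$ is real symmetric, I would fix an orthonormal eigenbasis $v_1,\dots,v_N$ with real eigenvalues $\lambda_1\ge\lambda_2\ge\dots\ge\lambda_N$. Because $G$ is $d$-regular, the all-ones vector is an eigenvector with eigenvalue $d$, and every eigenvalue has absolute value at most $d$; thus $\lambda_1=d$ is largest in absolute value and I can take $v_1=\mathbf{1}/\sqrt{N}$, so that $\lambda_G=\max_{i\ge 2}|\lambda_i|$.

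The first key step is to rewrite $e(B,B')$ as a bilinear form. Writing $\mathbf{1}_B$ for the indicator vector of $B$, the counting convention (edges inside $B\cap B'$ counted twice) gives exactly $e(B,B')=\mathbf{1}_B^{\mathsf T}A\,\mathbf{1}_{B'}$. Expanding in the eigenbasis as $\mathbf{1}_B=\sum_i\alpha_i v_i$ and $\mathbf{1}_{B'}=\sum_i\beta_i v_i$, one computes $\alpha_1=\langle\mathbf{1}_B,v_1\rangle=|B|/\sqrt{N}$ and likewise $\beta_1=|B'|/\sqrt{N}$. Consequently
$$e(B,B')=\sum_{i=1}^N\lambda_i\alpha_i\beta_i=d\,\frac{|B||B'|}{N}+\sum_{i\ge 2}\lambda_i\alpha_i\beta_i,$$
so the quantity to be bounded is precisely the tail sum $\sum_{i\ge 2}\lambda_i\alpha_i\beta_i$, while the dominant eigenvalue contribution produces exactly the main term $d|B||B'|/N$.

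The second step is to bound the tail. I would estimate $\bigl|\sum_{i\ge 2}\lambda_i\alpha_i\beta_i\bigr|\le\lambda_G\sum_{i\ge 2}|\alpha_i||\beta_i|$, then apply Cauchy--Schwarz to obtain $\lambda_G\bigl(\sum_{i\ge 2}\alpha_i^2\bigr)^{1/2}\bigl(\sum_{i\ge 2}\beta_i^2\bigr)^{1/2}$. Finally I would invoke Parseval: $\sum_{i\ge 2}\alpha_i^2\le\sum_i\alpha_i^2=\|\mathbf{1}_B\|^2=|B|$ and similarly $\sum_{i\ge 2}\beta_i^2\le|B'|$. Combining these gives the claimed bound $\lambda_G\sqrt{|B||B'|}$.

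There is essentially no serious obstacle here: the argument is a routine application of the spectral theorem for symmetric matrices. The only points demanding a little care are the bookkeeping that $e(B,B')$ matches the bilinear form $\mathbf{1}_B^{\mathsf T}A\,\mathbf{1}_{B'}$ under the stated double-counting convention, and the observation that discarding the $i=1$ term only shrinks the sums of squares, so that the inequalities $\sum_{i\ge 2}\alpha_i^2\le|B|$ (an inequality rather than an equality) are precisely what is needed. Since this is a textbook result cited as Corollary~9.2.5 of \cite{alon2016probabilistic}, I would keep the write-up short.
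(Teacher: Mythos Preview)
Your argument is the standard spectral proof and is correct; the paper does not give a proof at all but simply cites the result from the textbook, so your write-up actually supplies more than the paper does.
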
	
	Fix $n$ and $k$ as in \cref{t:auxiliary}. 
	
	\begin{corollary}\label{cor:numberofedgs}
		If $\lambda_G\le \frac{d}{n+2}$, then $e(B,B')>\frac{d|V(G)|}{(n+1)^2(n+2)}$, whenever $|B|,|B'|>|V(G)|/(n+1)$.
	\end{corollary}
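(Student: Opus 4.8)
The plan is to apply the Expander Mixing Lemma (\cref{lm:EML}) and then carry out a short estimate. Write $N = |V(G)|$. \cref{lm:EML} gives in particular the one-sided bound
\[
e(B,B') \;\ge\; \frac{d\,|B|\,|B'|}{N} - \lambda_G\sqrt{|B|\,|B'|} \;=\; \sqrt{|B|\,|B'|}\left(\frac{d\sqrt{|B|\,|B'|}}{N} - \lambda_G\right),
\]
so it suffices to bound the right-hand side from below by $\frac{dN}{(n+1)^2(n+2)}$.

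First I would use the hypothesis $|B|,|B'| > N/(n+1)$, which gives $\sqrt{|B|\,|B'|} > N/(n+1)$ and hence $\frac{d\sqrt{|B|\,|B'|}}{N} > \frac{d}{n+1}$. Combining this with $\lambda_G \le \frac{d}{n+2}$ (and noting $\frac{d}{n+2} < \frac{d}{n+1}$ since $n \ge 1$) yields
\[
\frac{d\sqrt{|B|\,|B'|}}{N} - \lambda_G \;>\; \frac{d}{n+1} - \frac{d}{n+2} \;=\; \frac{d}{(n+1)(n+2)} \;>\; 0.
\]
Now both factors in the displayed lower bound for $e(B,B')$ are positive, and each is strictly bounded below — the first by $\frac{N}{n+1}$, the second by $\frac{d}{(n+1)(n+2)}$ — so their product exceeds $\frac{N}{n+1}\cdot\frac{d}{(n+1)(n+2)} = \frac{dN}{(n+1)^2(n+2)}$, which is exactly the claimed inequality.

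There is essentially no obstacle here; the computation is routine. The one point to be mildly careful about is the sign: since the cross term $\lambda_G\sqrt{|B|\,|B'|}$ enters with a minus sign, one must not bound $\sqrt{|B|\,|B'|}$ from above (it could be as large as $N$). Factoring $\sqrt{|B|\,|B'|}$ out and verifying that the remaining parenthesised factor stays positive — which is where the strict inequality $\lambda_G < \frac{d}{n+1}$ is used — is the clean way to run the estimate.
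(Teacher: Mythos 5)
Your proof is correct and follows essentially the same route as the paper's: apply \cref{lm:EML}, factor out $\sqrt{|B||B'|}$, and bound each factor below using $|B|,|B'|>|V(G)|/(n+1)$ and $\lambda_G\le d/(n+2)$. Your remark about not bounding $\sqrt{|B||B'|}$ from above is exactly the point of the factoring step the paper also uses.
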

	\begin{proof}
		By \cref{lm:EML}, we have
		\begin{equation*}
			\begin{split}
				\left|e(B,B')\right|\ge & \ \frac{d|B||B'|}{|V(G)|}-\lambda_G \sqrt{|B||B'|} \\
				= & \ \sqrt{|B||B'|}\left(\frac{d\sqrt{|B||B'|}}{|V(G)|}-\lambda_G\right) \\
				> & \ \frac{|V(G)|}{(n+1)}\left(\frac{d}{n+1}-\frac{d}{n+2}\right)\ge \frac{d|V(G)|}{(n+1)^2(n+2)}
			\end{split}
		\end{equation*}

	\end{proof}
	
	We work with the following random graph model.
	Let $\ell$ be even and $d\in \mathbb{N}$.
	Write $\mathbb{G}_{\ell,d}$ for the $d$-regular random graph on $\ell$ vertices that is obtained by sampling $d$-many perfect matchings independently uniformly at  random.
	Note that we allow parallel edges.
	
	\begin{theorem}\label{th:spectral}
		Let $G\sim\mathbb{G}_{\ell,d}$ be a random $d$-regular graph.
		Then asymptotically as $\ell \to \infty$ almost surely we have $\lambda_G\le \frac{d}{n+2}$, for every $d$ large enough (depending on $n$).
	\end{theorem}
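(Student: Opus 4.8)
The plan is to deduce \cref{th:spectral} from the well-known second eigenvalue bounds for random regular (multi)graphs in the permutation model. Recall that in the model $\mathbb{G}_{\ell,d}$ we take $d$ independent uniformly random perfect matchings on $\ell$ vertices; equivalently (up to the usual contiguity arguments) one may think of $d/2$ independent uniform random permutations of $[\ell]$ together with their inverses, or directly invoke the matching model. The key external input is the Friedman-type / Broder--Shamir-type estimate: there is a function $\varepsilon(d)\to 0$ as $d\to\infty$ such that for $G\sim\mathbb{G}_{\ell,d}$, asymptotically almost surely $\lambda_G\le (\tfrac12+\varepsilon(d))\cdot 2\sqrt{d-1}$, or even the sharp bound $\lambda_G\le 2\sqrt{d-1}+o(1)$ a.a.s.\ (Friedman's theorem, adapted to the matching model by Bordenave and others). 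Either statement suffices for us.

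First I would record the precise random graph fact I intend to cite, in the form: for every $\delta>0$ there is $d_0$ such that for all $d\ge d_0$, $\limsup_{\ell\to\infty}\Pr[\lambda_G > (2+\delta)\sqrt{d}] = 0$ where $G\sim \mathbb{G}_{\ell,d}$ and $\ell$ ranges over even integers. This is standard; it follows e.g.\ from the trace method (bounding $\mathbb{E}[\operatorname{tr}(A^{2m})]$ for $A$ the adjacency matrix with $m\sim c\log\ell$) in the permutation/matching model, or by quoting Friedman's theorem together with the fact that the matching model is contiguous to (or directly handled like) the permutation model. Second, I would do the elementary arithmetic: we want $\lambda_G\le \frac{d}{n+2}$. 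Since $\frac{(2+\delta)\sqrt d}{d/(n+2)} = (2+\delta)(n+2)/\sqrt{d}\to 0$ as $d\to\infty$, there is $d_1=d_1(n,\delta)$ so that for all $d\ge d_1$ we have $(2+\delta)\sqrt d \le \frac{d}{n+2}$. Combining with the first step, for every $d\ge \max(d_0,d_1)$ we get $\limsup_{\ell\to\infty}\Pr[\lambda_G>\frac{d}{n+2}]=0$, which is exactly the assertion of \cref{th:spectral} (here $n$, and hence $n+2$, is a fixed constant, so "$d$ large enough depending on $n$" is precisely $d\ge\max(d_0,d_1)$, taking say $\delta=1$).

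I would then spell out why the matching model $\mathbb{G}_{\ell,d}$ (with parallel edges allowed) is covered by the cited bound: the adjacency matrix is a sum of $d$ independent permutation matrices of the form $P_{\sigma_i}$ where each $\sigma_i$ is a uniform fixed-point-free involution; the same trace-method / interlacing arguments used for uniform random permutations apply verbatim, since the only property used is that each $P_{\sigma_i}$ is an independent uniformly random permutation matrix from a conjugation-invariant class whose "typical cycle structure" contributes no spurious eigenvalues. Alternatively, and more cheaply, one observes that a union of $d$ random perfect matchings contains a union of $\lfloor d/2\rfloor$ random permutations as a subgraph after pairing the matchings, and eigenvalue bounds transfer along such comparisons up to constant factors absorbed into $\varepsilon(d)$; since we only need an asymptotically-in-$d$ statement, constant losses are harmless.

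The main obstacle here is purely one of citation hygiene rather than mathematics: pinning down a reference whose hypotheses exactly match the model $\mathbb{G}_{\ell,d}$ used in the paper (union of $d$ independent uniform perfect matchings, parallel edges permitted, $\ell$ even). If a clean black box in that exact model is unavailable, the fallback is to present the short trace-method computation: bound $\mathbb{E}\bigl[\operatorname{tr}\bigl((A-\tfrac{d}{\ell}J)^{2m}\bigr)\bigr]$ by counting closed walks of length $2m$ in which every edge is traversed at least twice, showing this is $\ell\cdot(2\sqrt{d-1})^{2m}(1+o(1))$ for $m=o(\log \ell)$, and then Markov's inequality with $m\to\infty$ slowly gives $\lambda_G\le (2+o(1))\sqrt{d-1}$ a.a.s. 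I would keep this computation in reserve and, in the written proof, lead with the citation and relegate the self-contained argument to a remark, since for our application the crude bound $\lambda_G\le \frac{d}{n+2}$ (a constant-order gain over the spectral norm $d$) is all that is required and is very far from the delicate Ramanujan threshold, so almost any version of the result — Friedman, Broder--Shamir, Bordenave, or the bare trace method — closes the argument.
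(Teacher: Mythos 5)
Your proposal is correct and follows essentially the same route as the paper: the paper simply cites Friedman's theorem (in the form $\lambda_G\le 2\sqrt{d-1}+1$ asymptotically almost surely, which in Friedman's work is proved directly for the union-of-$d$-random-perfect-matchings model, so your contiguity worries are moot) and observes that this is $<\frac{d}{n+2}$ once $d$ is large enough in terms of $n$. Your additional discussion of the Broder--Shamir/trace-method fallback is a reasonable safety net but not needed.
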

	\begin{proof}
		This follows from a much stronger result \cite[Theorem~1.3]{FriedmanExpander}, namely that $\lambda_G$ is bounded above by $2 \sqrt{d-1}+1$, which is of course $<\frac{d}{n+2}$ for every large enough $d$. 
	\end{proof}
	
	Next we build a sequence of graphs $(G_\ell)_{\ell}$ that will approximate the desired graphing $\fG$.
	Fix $\ell\in\mathbb{N}$ even.
	Consider $k$ independent samples from $\mathbb{G}_{\ell,d}$, where $d\in\mathbb{N}$ satisfies \cref{th:spectral}.
	We denote by $G^j_\ell$ the $j$-th sample and set $G_\ell=\bigcup_{j<k} G^j_\ell$.
	\begin{proposition}\label{pr:sequence}
		The sequence $(G_\ell)_\ell$ has the following properties with non-zero probability:
		\begin{enumerate}
			\item $\lambda_{G^j_\ell}\le \frac{d}{n+2}$ for every $n$ and $j<k$,
			\item the girth of $G_\ell$ tends to $\infty$ as $\ell\to \infty$.
		\end{enumerate}
	\end{proposition}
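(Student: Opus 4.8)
The plan is to verify the two clauses of the proposition essentially independently and then reconcile them scale by scale, the girth clause being the delicate one. For clause~(1) I would simply quote \cref{th:spectral}: for each fixed $j<k$ one has $\mathbb{P}(\lambda_{G^j_\ell}\le\frac{d}{n+2})\to 1$ as $\ell\to\infty$ once $d$ is large enough in terms of $n$, so a union bound over the $k$ independent samples gives $\mathbb{P}(\forall j<k\colon \lambda_{G^j_\ell}\le\frac{d}{n+2})\to 1$. In particular, for every $\varepsilon>0$ this event has probability at least $1-\varepsilon$ for all sufficiently large even $\ell$, so the spectral bound is ``typical'' at every large scale.

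For clause~(2) I would use the standard first-moment/Poisson description of short cycles in random regular multigraphs. Each $G^j_\ell$ is a union of $d$ uniformly random perfect matchings, so a fixed pair of vertices is joined by an edge of $G_\ell=\bigcup_{j<k}G^j_\ell$ with probability $\sim kd/\ell$, and a standard computation shows that for each fixed $g$ the number $X_{\ell,g}$ of cycles of length at most $g$ in $G_\ell$ (including length-$2$ ``cycles'' formed by parallel edges) has expectation converging to a finite constant $\lambda_g=\lambda_g(d,k)$, with $X_{\ell,g}$ converging in distribution to $\mathrm{Poisson}(\lambda_g)$ by the method of moments. Hence $\mathbb{P}(\mathrm{girth}(G_\ell)>g)=\mathbb{P}(X_{\ell,g}=0)\to e^{-\lambda_g}>0$, so for each $g$ there are $c_g>0$ and $\ell_1(g)$ with $\mathbb{P}(\mathrm{girth}(G_\ell)>g)\ge c_g$ for all even $\ell\ge\ell_1(g)$.

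To combine the two, fix any function $g(\ell)\to\infty$ that grows slowly enough that $\mathbb{P}(\mathrm{girth}(G_\ell)>g(\ell))\ge c_{g(\ell)}>0$ holds for all large even $\ell$ (for instance $g(\ell)=\max\{i:\ell_1(i)\le\ell\}$). Since the spectral event has probability tending to $1$, its complement has probability below $c_{g(\ell)}$ for all large even $\ell$, and therefore the event ``$\mathrm{girth}(G_\ell)>g(\ell)$ and $\lambda_{G^j_\ell}\le\frac{d}{n+2}$ for all $j<k$'' has strictly positive probability. Choosing, for each large even $\ell$, a realisation in this event (and an arbitrary one at the finitely many remaining scales) produces a sequence $(G_\ell)_\ell$ that satisfies clause~(1) at every scale and has $\mathrm{girth}(G_\ell)\ge g(\ell)\to\infty$, which is clause~(2); the ``non-zero probability'' in the statement is exactly the non-emptiness of these events. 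Since only a subsequence of $(G_\ell)_\ell$ is needed for the local--global limit argument later in the section, one could equally extract a subsequence along which the finitely many counts $X_{\ell,g}$ all vanish.

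The step I expect to be the main obstacle is precisely this reconciliation: a single random $G_\ell$ cannot simultaneously enjoy the spectral bound and arbitrarily large girth with probability near $1$, because large girth is a rare event whose probability decays as the threshold grows, so one must let the girth requirement $g(\ell)$ increase slowly enough to stay compatible with the (small, but not a~priori summable) failure probability of the spectral bound. If one wanted the whole sequence to live on a single probability space as an honest positive-probability event rather than being assembled realisation by realisation, one would additionally invoke a quantitative version of the estimate underlying \cref{th:spectral} — with failure probability summable in $\ell$ — together with the Borel--Cantelli lemma; working subsequentially avoids this entirely and is all that is used downstream.
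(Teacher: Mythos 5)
Your proof is correct and follows essentially the same route as the paper, which simply cites \cref{th:spectral} for clause~(1) and the classical positive-probability large-girth estimates for random regular graphs for clause~(2). Your explicit diagonalisation (letting the girth threshold $g(\ell)$ grow slowly enough to stay compatible with the spectral failure probability, then selecting a realisation at each scale) is exactly the combination step the paper leaves implicit, and your reading of ``non-zero probability'' as scale-by-scale non-emptiness of the joint event, rather than as a positive-probability event on the product space (where girth~$\to\infty$ would in fact be null), is the intended one.
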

	\begin{proof}
		The first assertion follows from \cref{th:spectral}.
		The second from the fact that $G_l\sim \mathbb{G}_{\ell,kd}$ and the fact that random regular graphs have large girth with non-zero probability \cite{bollobas,mckayshortcycles}.
	\end{proof}
	
	Fix such a sequence $(G_\ell)_\ell$.
	We note that instead of (2) in \cref{pr:sequence}, we could have only required that for each $m\in \mathbb{N}$ there is at most $o(\ell)$ cycles of length $m$.
	This holds asymptotically almost surely \cite{bollobas}.
	
	To finish the proof of \cref{t:auxiliary}, we use the theory of local-global convergence of bounded degree graphs \cite{hatamilovaszszegedy}.
	In order to do that, we need to recall the notion of \emph{colored statistics} of a finite graph or a graphing.
	Fix $\Delta\in \mathbb{N}$ and write $\mathfrak{G}$ for the class of connected graphs of degree bounded by $\Delta$, in our case $\Delta=kd$.
	Let $r,s\in \mathbb{N}$.
	Write $\mathfrak{G}^{\bullet}_{r,s}$ for the space of  $s$-color assigned rooted $r$-neighborhoods of vertices from graphs from $\mathfrak{G}$.
	That is, an element $\mathfrak{G}^{\bullet}_{r,s}$ is a rooted graph endowed with a vertex color assignment with $s$-colors that that can be found as an $r$-neighborhood of some vertex of some graph in $\mathfrak{G}$.
	Note that as the degree is uniformly bounded, each set $\mathfrak{G}^{\bullet}_{r,s}$ is finite (up to isomorphism).
	Let $G\in \mathfrak{G}$ be a finite graph, $r,s\in \mathbb{N}$ and $\beta:V(G)\to s$ be a vertex color assignment.
	The \emph{$(\beta,r)$-statistic} is a probability distribution on the space $\mathfrak{G}^{\bullet}_{r,s}$ that can be produced by sampling uniformly a vertex of $G$ and looking at the restriction of $\beta$ to its $r$-neighborhood.
	Write $\fQ_{G,r,s}$ for the space of all $(\beta,r)$-statistics, where $\beta$ runs over all vertex $s$-color assignments of $G$.
	By considering \emph{measurable} vertex $s$-color assignments, one can define the set $\fQ_{\fG,r,s}$, and all the other concepts, for any graphing $\fG$ of degree bounded by $\Delta$.
	
	In our application, the graphs are endowed with a fixed edge labeling with finitely many labels\footnote{Formally, there is no difference between \emph{labelings} and \emph{color assignments}. We use the word ``labeling'' to indicate that the function is a fixed part of the structure of the graph, while ``color assignment'' if it varies e.g. as in the definition of $\fQ_{G,s,r}$.} (namely, the labelings corresponding to the partition $G_\ell=\bigcup_j G^j_\ell$).
	As discussed in \cite[Section~19.1.4]{LovaszLargeNetworks}, the whole theory of local-global convergence works for bounded degree graphs endowed with such an additional structure.
    In that case, $\mathfrak{G}$ is the space of all connected graphs of degree bounded by $\Delta$ that are additionally endowed with an edge $k$-labeling, where $k\in \mathbb{N}$ is fixed.
    Similarly, $\mathfrak{G}^{\bullet}_{r,s}$ stands for the space of all vertex $s$-color assigned rooted edge $k$-labeled $r$-neighborhoods of vertices from graphs from $\mathfrak{G}$, and the $(\beta,r)$-statistics are with respect to the fixed edge $k$-labeling of the graph $G$, or measurable edge $k$-labeling of a graphing $\fG$.
    Altogether, we have the following result.
	
	\begin{theorem}[Theorem~3.2 in \cite{hatamilovaszszegedy} or Theorem~19.16 in \cite{LovaszLargeNetworks}]
		Every sequence of bounded degree graphs $(G_\ell)_\ell$ contains a subsequence that converges locally-globally to a graphing $\fG$.
		That is, if we write $(\ell_m)_{m\in \mathbb{N} }$ for the subsequence, then $\mathcal{Q}_{G_{\ell_m},r,s}$ converges in the Gromov-Hausdorff distance to $\mathcal{Q}_{\fG,r,s}$ for every $r,s\in \mathbb{N}$.
		
		A similar result holds when we assume that $(G_\ell)_{\ell\in \mathbb{N}}$ comes with a fixed edge labeling.
		In that case, $\fG$ is endowed with a measurable edge labeling as well and the convergence is with respect to colored statistics of edge labeled neighborhoods.
	\end{theorem}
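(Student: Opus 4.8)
The plan is to separate the argument into an elementary compactness step and a harder realization step. \textbf{Compactness and diagonalization.} Fix the degree bound $\Delta=kd$. For each pair $(r,s)$ the set $\mathfrak{G}^{\bullet}_{r,s}$ is finite up to isomorphism, so every $(\beta,r)$-statistic lives in the fixed finite-dimensional simplex $\mathcal{P}(\mathfrak{G}^{\bullet}_{r,s})$ of probability measures on this finite set, and $\fQ_{G_\ell,r,s}$ is a compact subset of it. Thus all the sets $\fQ_{G_\ell,r,s}$ (as $\ell$ varies) sit inside one fixed compact metric space; since the hyperspace of nonempty compact subsets of a compact metric space is itself compact under the Hausdorff metric (a standard fact about the Hausdorff metric on hyperspaces), for each fixed $(r,s)$ some subsequence of $(\fQ_{G_\ell,r,s})_\ell$ converges in Hausdorff distance. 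As there are only countably many pairs $(r,s)$, a diagonal argument produces a single subsequence $(\ell_m)_m$ along which $\fQ_{G_{\ell_m},r,s}\to \fQ_{r,s}$ for every $r,s$, where $\fQ_{r,s}$ is a closed subset of $\mathcal{P}(\mathfrak{G}^{\bullet}_{r,s})$. Because the ambient simplices are fixed, Gromov--Hausdorff convergence here is literally Hausdorff convergence. It remains to realize the coherent family $(\fQ_{r,s})_{r,s}$ as the colored statistics of a single graphing.

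\textbf{The candidate graphing.} Let $X$ be the space of isomorphism classes of rooted, connected, $\Delta$-bounded-degree countable graphs, with the local topology in which two rooted graphs are close when large neighborhoods of their roots agree; this is a compact Polish space, and it carries a natural graphing structure $\fG$ in which $x$ is adjacent to the re-rootings of $x$ at the neighbors of its root. Each finite graph $G_{\ell_m}$ determines an empirical measure $\mu_m$ on $X$ by rooting at a uniformly random vertex. By weak-$\ast$ compactness of $\mathcal{P}(X)$ (absorbed into the diagonal subsequence) we may assume $\mu_m\to\mu$ weakly; the mass-transport property of the finite uniform measures passes to $\mu$, so $\mu$ is invariant for the re-rooting involutions and $(X,\mu)$ together with $\fG$ is a graphing. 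The case $s=1$ of the diagonalization is exactly Benjamini--Schramm convergence of neighborhood distributions, so $\mu$ is forced to be the local limit and $\fQ_{\fG,r,1}=\fQ_{r,1}$ automatically.

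\textbf{Matching the colored statistics.} It remains to prove $\fQ_{\fG,r,s}=\fQ_{r,s}$ for all $r,s$, which splits into two inclusions. For $\fQ_{\fG,r,s}\subseteq\fQ_{r,s}$, I would first note that any measurable vertex color assignment $\beta\colon X\to s$ can be approximated in $\mu$-measure by \emph{local rules}, i.e.\ assignments that read off only the isomorphism type of a bounded-radius neighborhood of the root; for a local rule the colored $r$-statistic is a continuous function of the uncolored $(r+t)$-neighborhood distribution, so applying the same rule to $G_{\ell_m}$ yields statistics converging to the $\fG$-statistic of $\beta$, placing the latter in the closed set $\fQ_{r,s}$, and general $\beta$ follows since the statistic varies continuously with $\beta$. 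For the reverse inclusion $\fQ_{r,s}\subseteq\fQ_{\fG,r,s}$, take $q\in\fQ_{r,s}$, write it as a limit of $(\beta_m,r)$-statistics of color assignments $\beta_m$ on $G_{\ell_m}$, and lift the $\beta_m$ to empirical measures $\tilde\mu_m$ on the compact space $\tilde X$ of rooted $s$-vertex-colored $\Delta$-bounded graphs. A weak-$\ast$ limit $\tilde\mu$ of the $\tilde\mu_m$ is a unimodular coupling of $\mu$ with a coloring whose neighborhood statistic equals $q$; extracting from $\tilde\mu$ a genuine measurable color assignment on $(X,\mu)$ realizing $q$ is the content of this direction.

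The main obstacle is precisely this last extraction. The weak limit $\tilde\mu$ naturally lives on a coloring of an \emph{extension} of the graphing $(X,\mu)$---the colored rooted-graph space $\tilde X$ may refine $X$---whereas the statement requires a measurable coloring of $\fG$ itself. Overcoming this is where the real work of \cite{hatamilovaszszegedy} lies: one must argue that the relevant extension does not enlarge the colored statistics, for instance by realizing all limiting colorings on one fixed, suitably rich graphing via a relatively independent joining, or by a measurable-selection and disintegration argument that transfers the coloring back to $(X,\mu)$ without changing its $(\beta,r)$-statistic. Finally, the edge-labeled version is a routine variant: one enlarges $X$ to rooted edge-$k$-labeled graphs and keeps the label as fixed structure while only the vertex colors vary, and every step above---finiteness of $\mathfrak{G}^{\bullet}_{r,s}$, the hyperspace compactness, the construction of $\fG$, and both inclusions---goes through verbatim with labeled neighborhoods in place of unlabeled ones.
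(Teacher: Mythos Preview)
The paper does not prove this theorem; it is quoted verbatim from \cite{hatamilovaszszegedy} and \cite{LovaszLargeNetworks} as a black box and then applied to finish the proof of \cref{t:auxiliary}. So there is no ``paper's own proof'' to compare against.

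Evaluating your sketch on its own terms: the compactness and diagonalization step is correct, as is the observation that Gromov--Hausdorff reduces to ordinary Hausdorff convergence in a fixed finite-dimensional simplex. The real gap is the one you yourself flag, and it is more serious than a missing measurable-selection lemma. Your candidate graphing---the re-rooting graphing on the space $X$ of rooted $\Delta$-bounded graphs equipped with the Benjamini--Schramm limit measure $\mu$---is in general \emph{too small} to realize the limiting colored statistics. Concretely, for the very sequence used in this paper the local limit is supported on the single rooted graph ``the $kd$-regular tree with its edge $k$-labeling'', so $(X,\mu)$ is essentially a one-point space, the re-rooting graphing is trivial, and $\fQ_{\fG,r,s}$ contains only the statistic of a constant coloring. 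The inclusion $\fQ_{r,s}\subseteq\fQ_{\fG,r,s}$ then fails outright.

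The construction in \cite{hatamilovaszszegedy} avoids this by building the limit graphing on a strictly richer space: one decorates vertices with i.i.d.\ $[0,1]$ labels (or, equivalently, passes to a Bernoulli extension) before taking the limit, so that the resulting graphing carries enough randomness to realize every point of $\fQ_{r,s}$ as the statistic of a genuine measurable coloring. Your last paragraph gestures at an ``extension'' $\tilde X$ of $X$, but the resolution is not to transfer a coloring from $\tilde X$ back down to $(X,\mu)$---that is impossible in the tree example---it is to take the graphing itself to live on the larger space from the start. Once one accepts that change of underlying space, both inclusions go through along the lines you indicate, and the edge-labeled variant is indeed routine.
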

	
	Now it is easy to finish the proof of \cref{t:auxiliary}. Label the edges of $G^j_\ell$ by $j$. By the theorem above, by passing to a subsequence, we may assume that $(G_\ell)_\ell$ admits a local global limit, $\mathcal{G}$.
	By the definition, $\fG$ is a graphing on some probability space $(X,\mu)$ with measurable edge labeling with $k$ colors.
	Let $\fG_j$ be the restriction of $\fG$ to the $j$-th color.
	
	It follows from the ordinary local convergence that $\fG$ is acyclic, $kd$-regular and each $\fG_j$ is $d$-regular.
	Hence (1) in \cref{t:auxiliary} is satisfied.
	Finally, let us verify \eqref{c:exp}.
	Let $B,B'\subseteq X$ be such that $\mu(B),\mu(B')\geq 1/{n}$.
	In particular, we have $\mu(B),\mu(B')> 1/{(n+1)}$.
	We encode the sets $B$ and $B'$ by a measurable vertex color assignment $\beta$ with four colors, depending on which of the sets $B \setminus B',B' \setminus B,B \cap B', X\setminus (B \cup B')$ a given element belongs to.
	By the local-global convergence, we find vertex color assignments $(\alpha_\ell)_\ell$ of $(G_\ell)_\ell$ so that the colored $(\alpha_\ell,2)$-statistics of $G_\ell$ converge to the colored $(\beta,2)$-statistic of $\fG$.
	Note that $(\alpha_\ell)_\ell$ encodes sets $(B_\ell)_\ell$ and $(B'_\ell)_{\ell}$ that eventually have size strictly greater than $|V(G_\ell)|/{(n+1)}$ as this is encoded by $(\alpha_\ell,2)$-statistics (in fact $(\alpha_\ell,1)$-statistics).
	By \cref{cor:numberofedgs}, we have $e(B_\ell,B'_\ell)>d|V(G_\ell)|/{(n+1)^2(n+2)}$ in $G_n^j$ for every $j<k$ and $\ell$ large enough.
	Since the degrees of each $G^j_\ell$ are bounded by $d$, we have $|\{x\in B_\ell:\exists y\in B'_\ell \ (x,y)\in G^j_\ell\}|>|V(G_\ell)|/{(n+1)^2(n+2)}$.
	As this information is encoded by $(\alpha_\ell,2)$-statistics, we have that $\mu(\{x\in B:\exists y\in B' \ (x,y)\in \fG_j\})\ge 1/{(n+1)^2(n+2)}>0$ for every $j<k$.
	This finishes the proof.

	\bibliographystyle{abbrv}
	\bibliography{ref.bib}

\begin{thebibliography}{10}

\bibitem{alon2016probabilistic}
N.~Alon and J.~H. Spencer.
\newblock {\em The probabilistic method}.
\newblock John Wiley \& Sons, 2016.

\bibitem{bartoszynski1995set}
T.~Bartoszynski and H.~Judah.
\newblock {\em Set Theory: on the structure of the real line}.
\newblock CRC Press, 1995.

\bibitem{bollobas}
B.~Bollob\'{a}s.
\newblock The independence ratio of regular graphs.
\newblock {\em Proc. Amer. Math. Soc.}, 83(2):433--436, 1981.

\bibitem{brandt_chang_grebik_grunau_rozhon_vidnyaszkyhomomorphisms}
S.~Brandt, Y.-J. Chang, J.~Greb\'{i}k, C.~Grunau, V.~Rozho{\v n}, and
  Z.~Vidny\'{a}nszky.
\newblock On homomorphism graphs.
\newblock {\em arXiv preprint arXiv:2111.03683}, 2021.

\bibitem{BrooksMeas}
C.~Conley, A.~S. Marks, and R.~D. Tucker-Drob.
\newblock Brooks' theorem for measurable colorings.
\newblock {\em Forum Math. Sigma}, e16(4):23pp, 2016.

\bibitem{di2015basis}
C.~A. Di~Prisco and S.~Todor{\v c}evi\'c.
\newblock Basis problems for {B}orel graphs.
\newblock {\em Zb. Rad.(Beogr.), Selected topics in combinatorial analysis},
  17(25):33--51, 2015.

\bibitem{FriedmanExpander}
J.~Friedman.
\newblock A proof of {A}lon’s second eigenvalue conjecture and related
  problems.
\newblock {\em Mem. Amer. Math. Soc.}, 195(910):viii+100, 2008.

\bibitem{hatamilovaszszegedy}
H.~Hatami, L.~Lov\'{a}sz, and B.~Szegedy.
\newblock Limits of locally-globally convergent graph sequences.
\newblock {\em Geom. Funct. Anal.}, 24(1):269--296, 2014.

\bibitem{kechrisclassical}
A.~S. Kechris.
\newblock {\em Classical descriptive set theory}, volume 156 of {\em Graduate
  Texts in Mathematics}.
\newblock Springer-Verlag, New York, 1995.

\bibitem{kechris_marks2016descriptive_comb_survey}
A.~S. Kechris and A.~S. Marks.
\newblock Descriptive graph combinatorics.
\newblock 2020.

\bibitem{KST}
A.~S. Kechris, S.~Solecki, and S.~Todor\v{c}evi\'c.
\newblock Borel chromatic numbers.
\newblock {\em Adv. Math.}, 141(1):1--44, 1999.

\bibitem{LovaszLargeNetworks}
L.~Lov\'{a}sz.
\newblock {\em Large networks and graph limits}, volume~60 of {\em American
  Mathematical Society Colloquium Publications}.
\newblock American Mathematical Society, Providence, RI, 2012.

\bibitem{DetMarks}
A.~S. Marks.
\newblock A determinacy approach to borel combinatorics.
\newblock {\em J. Amer. Math. Soc.}, 29(2):579--600, 2016.

\bibitem{mckayshortcycles}
B.~D. McKay, N.~C. Wormald, and B.~Wysocka.
\newblock Short cycles in random regular graphs.
\newblock {\em Electron. J. Combin.}, 11(1):Research Paper 66, 12, 2004.

\bibitem{pikhurko2021descriptive_comb_survey}
O.~Pikhurko.
\newblock Borel combinatorics of locally finite graphs, 2021.

\bibitem{toden}
S.~Todor\v{c}evi\'c and Z.~Vidny\'anszky.
\newblock A complexity problem for {B}orel graphs.
\newblock {\em Invent. Math.}, 226:225--249, 2021.

\end{thebibliography}
	
\end{document}